\newtheorem{theorem}{Theorem}[section]
\newtheorem{proposition}[theorem]{Proposition}
\newtheorem{corollary}[theorem]{Corollary}
\newtheorem{problem}[theorem]{Problem}
\theoremstyle{definition}
\theoremstyle{remark}
\newtheorem{example}[theorem]{Example}
\newcommand{\R}{\mathbb{R}}
\newcommand{\Q}{\mathbb{Q}}
\newcommand{\N}{{\sf N}}
\newcommand{\C}{\mathcal{C}}
\newcommand{\D}{\sf D}
\newcommand{\E}{{\sf E}}
\newcommand{\K}{{\sf K}}
\renewcommand{\L}{{\sf{L}}}
\newcommand{\F}{\mathcal{F}}
\newcommand{\G}{\mathcal{G}}
\newcommand{\M}{\sf M}
\renewcommand{\P}{\sf P}
\newcommand{\algr}{{\hbox{\textsc{$d$-Representability}}}}
\newcommand{\algc}{{\hbox{\textsc{$d$-Collapsibility}}}}
\newcommand{\algl}{{\hbox{\textsc{$d$-LerayNumber}}}}
\DeclareMathOperator{\conv}{conv}
\begin{document}

\title{Intersection patterns of convex sets via simplicial complexes, a survey}
\author{Martin Tancer\thanks{
Department of Applied Mathematics and Institute for Theoretical Computer Science (supported by project 1M0545
of The Ministry of Education of the Czech Republic), Faculty of Mathematics
and Physics, Charles University, Malostransk\'e n\'am.~25, 118~00 Prague,
Czech Republic. Partially supperted by project GAUK 421511.}}

\maketitle
{\bf Keywords:} convex set, intersection pattern, simplicial complex,
$d$-representability, $d$-collapsibility, Helly-type theorem\\

{\bf MSC2010 Classification:} primary 52A35, secondary 05E45, 52A20
\begin{abstract}
The task of this survey is to present various results on intersection patterns
of convex sets. One of main tools for studying intersection patterns is a point
of view via simplicial complexes. We recall the definitions of so called
$d$-representable, $d$-collapsible and $d$-Leray simplicial complexes which are
very useful for this study. We study the differences among these notions and
we also focus on computational complexity for recognizing them. A list of
Helly-type theorems is presented in the survey and it is also discussed how
(important) role play the above mentioned notions for the theorems. We also
consider intersection patterns of good covers which generalize collections of
convex sets (the sets may be `curvy'; however their intersections cannot be too
complicated).  We mainly focus on new results.
\end{abstract}





\section{Introduction}
An important branch of combinatorial geometry regards studying intersection
patterns of convex sets. Research in this area was initiated by a theorem of
Helly~\cite{helly23} which can be formulated as follows: If $C_1, \dots, C_n$ are convex sets
in $\R^d$, $n \geq d + 1$, and any collection of $d+1$ sets among $C_1, \dots,
C_n$ has a nonempty intersection, then all the sets have a common point. We
will focus on results of similar spirit; however, we have to set up some
notation first.

\subsection{Simplicial complexes} 
First we recall simplicial complexes which provide a convenient language for
studying intersection patterns of convex sets. We assume that the reader is
familiar with simplicial complexes, we only briefly mention the basics. For
further details the reader is referred to the books
like~\cite{hatcher01,matousek03, munkres84}.

We deal with finite \emph{abstract simplicial complexes}, i.e., collections
$\K$ of subsets of a finite set $X$ such that if $\alpha \in \K$ and $\beta
\subset \alpha$, then $\beta \in \K$. Elements of $\K$ are \emph{faces} of $\K$.
The \emph{dimension} of a face $\alpha \in \K$ is defined as $|\alpha| - 1$; 
$i$-dimensional faces for $i \in \{0,1,2\}$ are \emph{vertices}, \emph{edges}
and \emph{triangles} respectively. The \emph{dimension} of a simplicial complex
is the maximum of dimensions of its faces. Graphs coincide with 1-dimensional
simplicial complexes. If $V'$ is a subset of vertices of $\K$ then the \emph{induced
subcomplex} $\K[V']$ is a complex of faces $\alpha \in \K$ such that $\alpha
\subseteq V'$. We use the notation $\L \leq \K$ for pointing out that $\L$ is an
induced subcomplex of $\K$. An \emph{$m$-skeleton} of $\K$ is a simplicial complex consisting
of faces of $\K$ of dimension at most $m$. We denote it by $\K^{(m)}$. The
$m$-dimensional \emph{full simplex}, $\Delta_m$, is a simplicial complex with
the vertex set $\{1, \dots, m+1\}$ and all possible faces.
Let $Y$ be a set of affinely independent points in $\R^{|X|-1}$ such that
there is a bijection $f\colon X \rightarrow Y$. The \emph{geometric realization} of $\K$, denoted by
$|\K|$, is the topological space $\bigcup \{\conv f(\alpha)\colon \alpha
\in \K\}$, where $\conv$ denotes the convex hull.

\subsection{$d$-representable complexes}
Let $\C$ be a collection of some subsets of a given set $X$. The \emph{nerve}
of $\C$, denoted by $\N(\C)$, is a simplicial complex whose vertices are the
sets in $\C$ and whose faces are subcollections $\{C_1, \dots, C_k\} \subseteq
\C$ such that the intersection $C_1 \cap \cdots \cap C_k$ is nonempty. The
notion of nerve is designed to record the `intersection pattern' of the
sets in $\C$.

A simplicial complex $\K$ is \emph{$d$-representable} if it is isomorphic to the
nerve of a finite collection of convex sets in $\R^d$. Such a collection of
convex sets is called a \emph{$d$-representation} for $\K$. $d$-representable
simplicial complexes are the central objects of our study in this survey. As it
was mentioned above, they exactly record all possible intersection patterns of
finite collections of convex sets in $\R^d$.

Using the notion of $d$-representability, the Helly theorem can be reformulated
as follows: If a $d$-representable complex on at least $d+1$ vertices contains
all possible $d$-faces, then it is already a full simplex. This statement is, via
induction, equivalent with the following: $d$-representable simplicial complex does not contain an
induced $k$-dimensional \emph{simplicial hole} for $k \geq d$, i.e., a complex
isomorphic to $\Delta^{(k)}_{k+1}$. (Note that ``hole'' refers here to a hole
in a certain topological space, and in particular this notion is different from
$k$-hole in the context of Horton sets. The dimension of the hole refers to the
dimension of the boundary rather than to the dimension of the missing part. We
also remark that a geometric representation of
$k$-dimensional simplicial hole is homeomorphic to the \emph{$k$-sphere} $S^k$;
and it is the simplest way to obtain the $k$-sphere as a simplicial
complex.) The \emph{Helly number} of a simplicial complex $\K$ is 
the total number of vertices of the largest simplicial hole in $\K$ (i.e., the
dimension of the hole plus 2). Another reformulation of the Helly theorem thus
states that the Helly number of a $d$-representable complex is at
most $d+1$.

\begin{example}
Figure~\ref{f:2rep} shows a collection $\C = \{C_1, C_2, C_3, C_4, C_5\}$ of
convex sets (on left) and their nerve (on right). In other words the simplicial
complex on right is $2$-representable and $\C$ is a $2$-representation of
it. The Helly number of this collection equals $3$.
\end{example}

\begin{figure}
\begin{center}
\includegraphics{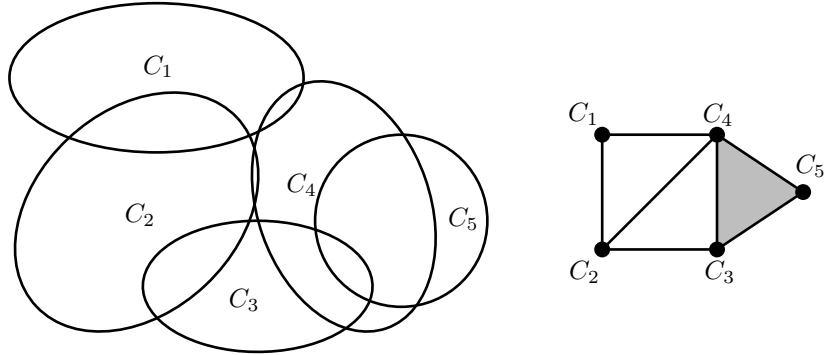}
\caption{A $2$-representable complex and its nerve.}
\label{f:2rep}
\end{center}
\end{figure}

\subsection{What is in the survey?} 
The task of the survey is to give an overview
of a recent developments on the study of intersection patterns of finite
collections of convex sets. We are mainly focusing on the description of
intersection patterns via simplicial complexes. Apart from geometrical `ad
hoc' arguments there are two other important approaches we are going to
discuss. First of them is combinatorial and regards $d$-collapsibility. The
second one is topological and regards the Leray number of a simplicial complex.
These two approaches are mainly discussed in the following two sections.
Algorithmic aspects of recognition of intersection patterns are briefly
discussed in section~\ref{s:algo}. In section~\ref{s:gc} we mention some
properties of good covers as a natural generalization of collections of convex
sets. Finally, section~\ref{s:thm} contains a list of theorems on intersection
patterns. This list is at the end of the survey in order that all the necessary
terminology is already built up. However, many of the results in
section~\ref{s:thm} can be understand without a detailed study of the previous
sections.

Since our task is to cover only a very selected part of convex geometry, we
refer the reader to other sources regarding the related areas. In particular we
refer to~\cite{matousek02} and the references therein for extended basic
overview on \emph{discrete convex geometry} including Radon,
Carath\'{e}odory, and Tverberg type theorems; we also refer to~\cite{eckhoff93}
for another point of view on the area; and to~\cite{goodman-pollack-wenger93}
for results on transversals to convex sets. We also do not focus on the theory
of $f$-vectors. For a reader interested in $f$-vectors we refer
to~\cite{billera-bjorner97} or to~\cite{kalai02} for a useful method for
investigating $f$-vectors (and related also to other branches mentioned here).

\section{$d$-collapsible and $d$-Leray complexes}
There are two other important classes of simplicial complexes related to the
$d$-representable ones. Informally, a simplicial complex is $d$-collapsible if
it can be vanished by removing faces of dimension at most $d-1$ which are
contained in a single maximal face; a simplicial complex is $d$-Leray if its
induced subcomplexes do not contain, homologically, holes of dimension $d$ or
more.

Wegner~\cite{wegner75} proved that $d$-representable simplicial complexes are
$d$-collapsible and also that $d$-collapsible complexes are $d$-Leray. 

Now we precisely define $d$-collapsible complexes and then $d$-Leray complexes.

Let $\K$ be a simplicial complex. Let $T$ be the collection of inclusion-wise
maximal faces of $\K$. A face $\sigma$ is \emph{$d$-collapsible} if there is only one
face $\tau \in T$ containing $\sigma$ (possibly $\sigma = \tau$), and moreover $\dim \sigma \leq d-1$. The simplicial complex
$$
\K' := \K \setminus \{ \eta \in \K: \eta \supseteq \sigma\}
$$
is an \emph{elementary $d$-collapse} of $\K$. For such a situation, we use the
notation $\K \rightarrow \K'$. A simplicial complex is \emph{$d$-collapsible} if
there is a sequence,
$$
\K \rightarrow \K_1 \rightarrow \K_2 \rightarrow \cdots \rightarrow \emptyset,
$$
of elementary $d$-collapses ending with an empty complex.

\begin{figure}
\begin{center}
\includegraphics{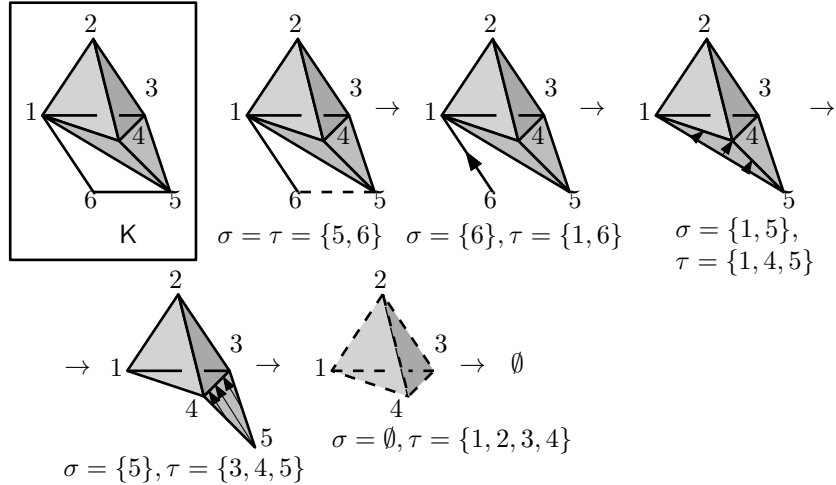}
\caption{A $2$-collapsing of a simplicial complex.}
\label{f:2col}
\end{center}
\end{figure}

\begin{example}
A simplicial complex $\K$ consisting of a full tetrahedron, two full triangles
and one hollow triangle in Figure~\ref{f:2col} is $2$-collapsible. For a
proof there is a 2-collapsing of $\K$ drawn on the picture. In every step the
faces $\sigma$ and $\tau$ are indicated.
\end{example}

A simplicial $\K$ complex is \emph{$d$-Leray} if the $i$th reduced homology
group $\tilde{H}_i(\L)$
(over~$\Q$) vanishes for every induced subcomplex $\L \leq \K$ and every $i
\geq d$.

We mention several remarks regarding $d$-collapsible and $d$-Leray complexes.
Deeper properties of them are studied in the following sections.

\begin{figure}
\begin{center}
\includegraphics{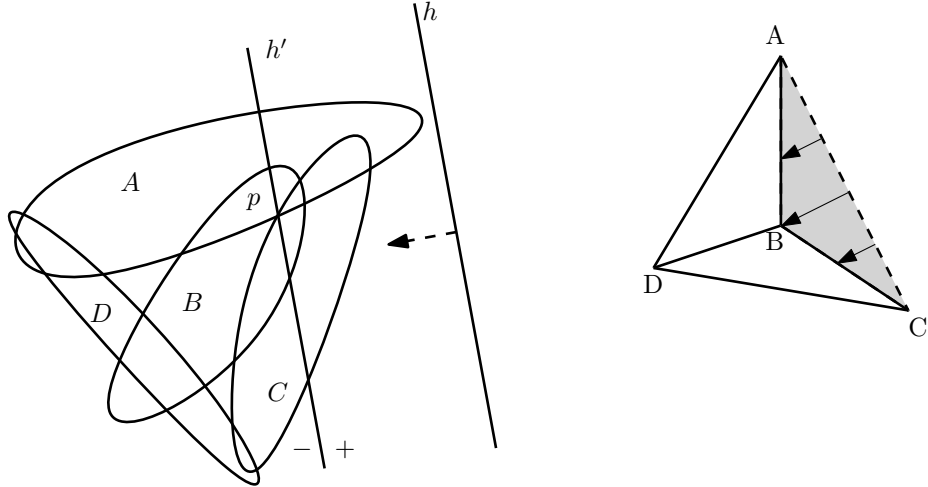}
\caption{A schematic sketch of the proof of Wegner's theorem. A generic
hyperplane $h$ is slided from infinity to minus infinity until there is a
nontrivial intersection of the convex sets on its positive side. In this case
it slides to $h'$ and it cuts off $A \cap B \cap C$ (it also cuts off $A \cap
C$, but for the moment we consider a maximal collection). From genericity there
is a single point $p \in A \cap B \cap C \cap h'$. It can be shown (using
Helly's theorem) that there is only at most $d$ sets of the starting collection
necessary to obtain $p$. In this case $\{p\} = A \cap C \cap h'$. Thus we obtain a
$d$-collapse with $\sigma = \{A, C\}$ and $\tau = \{A,B,C\}$. Finally, $A\cap
(h')^-, \dots, D\cap(h')^-$ form a $d$-representation for the resulting
collapsed complex thus the procedure can be repeated.} 
\label{f:wc}
\end{center}
\end{figure}

\begin{itemize}

\item
The fact that $d$-collapsible complexes are $d$-Leray is simple (for a reader
familiar with homology) since $d$-collapsing does not affect homology of
dimension $d$ or more.

It is a bit less trivial to show that a $d$-representable complex $\K$ is
$d$-collapsible. The idea is to slide a generic hyperplane (from infinity to
minus infinity) over a $d$-representation for $\K$ and gradually cut off whatever
is on the positive side of the hyperplane. See Figure~\ref{f:wc} and the text
bellow the picture for a more detailed sketch. The reader is referred
to~\cite{wegner75} for full details.

The
inclusion of $d$-representable complexes in $d$-Leray complexes can be also
deduced, without using Wegner's results, from the nerve theorem (see
Theorem~\ref{t:nerve}). 

\item
A $d$-dimensional simplicial complex is $(d+1)$-collapsible and hence also
$(d+1)$-Leray. For a complex $\K$ the smallest possible $\ell$ such that $\K$
is $d$-Leray is traditionally called the \emph{Leray number} of $\K$.

\item
Neither $d$-representability, $d$-collapsibility nor the Leray number is an
invariant under a homeomorphism: the full simplex $\Delta_m$ is
$0$-represent\-able; however, its barycentric subdivision is not even
$(m-1)$-Leray, since it contains an $(m-1)$-sphere as an induced subcomplex.

\item
It is not very difficult to see that every induced subcomplex of a
$d$-collapsible complex is again $d$-collapsible. If $\K[V'] \leq \K$ and 
$\K \rightarrow \K_1 \rightarrow \cdots \rightarrow \emptyset$ is a
$d$-collapsing of $\K$, then $\K[V'] \rightarrow \K_1[V'] \rightarrow \cdots
\rightarrow \emptyset = \emptyset[V']$ is a $d$-collapsing for $\K[V']$, where
some steps are possibly trivial, i.e., $\K_i[V'] = \K_{i+1}[V']$.

\item

The Helly theorem easily follows from the fact that $d$-represent\-able complexes
are contained in $d$-collapsible ones (or $d$-Leray ones). For we have that a
$d$-dimensional simplicial hole is neither $d$-collapsible (nor $d$-Leray).

On the other hand these two notions provide (much) stronger limitations to
intersection patterns than the Helly theorem. For instance they also exclude
(in dimension 2) the boundary of the octahedron (i.e., the simplicial complex
with vertices $\{-3,-2, -1, 1, 2, 3\}$ and faces $\alpha$ such that 
there is no $i \in \{1,2,3\}$ with $-i, i \in \alpha$) or a triangulation 
of a torus.
\end{itemize}

The gaps among these notions are discussed in more detail in the following section.

\section{Gaps among the notions}
\label{s:gaps}
In this section we overview how the notions of $d$-representable,
$d$-collapsible and $d$-Leray complexes differ. We also relate these
differences with the dimension of the complex.

\subsection{Every finite simplicial complex is $d$-representable for $d$ big
enough}
Let $\K$ be a simplicial complex on vertex set $\{1, \dots, n\}$. Let
$x_1, \dots, x_n$ be affinely independent points in $\R^{n-1}$ (i.e., they form
a simplex). For~a~non\-empty face $\alpha = \{a_1, \dots, a_t\} \in \K$ let $b_\alpha$ be the barycentre of the points $x_{a_1}, \dots, x_{a_t}$. Then for $i \in \{1,
\dots, n\}$ we set $C_i := \conv\{ b_\alpha: i \in \alpha, \alpha \in \K\}$. The reader is
welcome to check that sets $C_{i_1}, \dots, C_{i_k}$ intersect if and only if
$\{i_1, \dots, i_k\} \in \K$. Thus, the nerve of $C_1, \dots, C_n$ is
isomorphic to $\K$. See Figure~\ref{f:dr} for an illustration. (If we really
would not care about the dimension, it would be even easier to check the
situation where the points $b_{\alpha}$ are set to be the vertices of a simplex
of dimension $|\K| - 1$.)

\begin{figure}
\begin{center}
\includegraphics{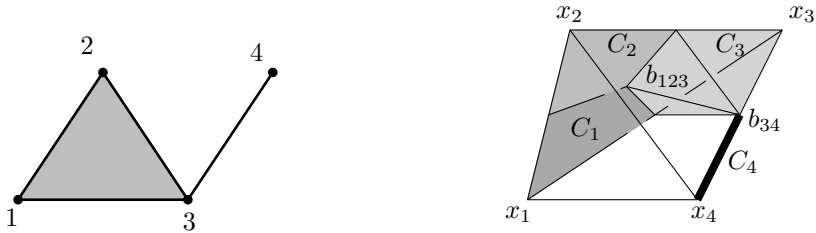}
\caption{Representing a complex.}
\label{f:dr}
\end{center}
\end{figure}

There is, however, another way how to obtain a representation of a complex
depending on the dimension of the complex. 

\begin{theorem}[Wegner~\cite{wegner67}, Perel'man~\cite{perelman85}]
\label{t:2d+1}
Let $\K$ be a $d$-dimensional simplicial complex. Then $\K$ is $(2d +
1)$-representable.
\end{theorem}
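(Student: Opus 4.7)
The plan is to adapt the barycentric-convex-hull construction from the preceding subsection to the target dimension $2d+1$. Place $n$ points $x_1, \dots, x_n \in \R^{2d+1}$ in general position, meaning that every $2d+2$ of them are affinely independent (e.g.\ pick points on the moment curve). For each non-empty face $\alpha \in \K$ set $b_\alpha := \frac{1}{|\alpha|}\sum_{i \in \alpha} x_i$, and for each vertex $i$ of $\K$ define
\[
C_i := \conv\{b_\alpha : \alpha \in \K,\ i \in \alpha\}.
\]
I then claim that $C_1, \dots, C_n$ is a $(2d+1)$-representation of $\K$.

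One direction is immediate: if $\alpha = \{i_1, \dots, i_k\} \in \K$, then $b_\alpha$ is a generator of each $C_{i_j}$, so $b_\alpha \in \bigcap_j C_{i_j}$. For the converse, I must show that $\bigcap_{i \in \beta} C_i \neq \emptyset$ forces $\beta \in \K$. I would first reduce to the case where $\beta$ is a \emph{minimal} non-face of $\K$; since $\dim \K \leq d$, such a $\beta$ has at most $d+2 \leq 2d+2$ vertices. Given a point $p \in \bigcap_{i \in \beta} C_i$, each $i \in \beta$ yields a representation $p = \sum_{\alpha \ni i} \lambda_\alpha^{(i)} b_\alpha$; after expanding the barycentres, this becomes a convex combination $p = \sum_j \mu_j^{(i)} x_j$ in which $\mu_j^{(i)} > 0$ only if $i$ and $j$ are co-contained in some face of $\K$. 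The strategy is then to invoke general position of the $x_j$'s to force these representations (as $i$ varies over $\beta$) to share a common face $\alpha^* \in \K$ with $\beta \subseteq \alpha^*$, contradicting $\beta \notin \K$.

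The main obstacle I foresee lies in controlling the support sizes so as to stay within the $2d+2$ threshold at which general position guarantees uniqueness of a convex representation. Carathéodory's theorem in $\R^{2d+1}$ trims each representation of $p$ to at most $2d+2$ barycentres, but expanding these barycentres in terms of the $x_j$'s can swell the support up to $(2d+2)(d+1)$ vertices, well beyond the range in which $(2d+2)$-affine-independence forces coefficients to coincide. Overcoming this will likely require either selecting $p$ generically in the intersection so that its minimal representation already encodes the combinatorics, or bypassing the barycentric construction and working directly with the geometric embedding of $|\K|$ into $\R^{2d+1}$ guaranteed by $\dim \K \leq d$, then arguing that convex hulls of stars cannot overlap unless the underlying combinatorial stars share a face. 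This delicate general-position bookkeeping is, I expect, where the real content of the Wegner--Perel'man theorem resides.
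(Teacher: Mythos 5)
There is a genuine gap here, and you have in fact located it yourself: the converse direction (a nonempty intersection $\bigcap_{i\in\beta}C_i$ forces $\beta\in\K$) is never established, and your closing paragraph concedes that the general-position bookkeeping breaks down because the supports of the expanded convex combinations can involve up to $(2d+2)(d+1)$ of the $x_j$, far beyond the range where $(2d+2)$-wise affine independence pins down coefficients. This is not a technicality that more care will fix: the barycentric construction, pushed down from $\R^{n-1}$ to a generic position in $\R^{2d+1}$, genuinely creates spurious intersections. The sets $C_i$ are convex hulls of closed vertex stars, and once a vertex has large degree its $C_i$ is a full-dimensional body in $\R^{2d+1}$; two such bodies can overlap even though the corresponding stars share no face, and neither a generic choice of the witness point $p$ nor the existence of an embedding $|\K|\hookrightarrow\R^{2d+1}$ rules this out (embeddability controls the stars themselves, not their convex hulls). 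So the proposal identifies the difficulty but supplies no mechanism to overcome it.

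The paper's proof overcomes it with a different idea entirely: take a $(2d+2)$-dimensional $(d+1)$-neighborly polytope $Q$ with $n$ vertices (e.g.\ a cyclic polytope), pass to the dual $Q^*$, whose $n$ facets are such that any $d+1$ of them share a face, and project via the Schlegel diagram to obtain convex sets $C_1,\dots,C_n$ in $\R^{2d+1}$ whose nerve contains the full $d$-skeleton of $\Delta_{n-1}$. The polytopal structure is exactly what your construction lacks: every intersection among the $C_i$ lies on their boundaries (it is the projection of a face of $Q^*$), so each unwanted face $\vartheta$ of the nerve can be destroyed by shaving a small neighborhood of the corresponding boundary intersection off each participating set, preserving convexity and deleting only $\vartheta$ and its superfaces. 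Iterating leaves a $(2d+1)$-representation of $\K$. If you want to salvage your write-up, this is the missing ingredient to import; the barycentric construction should be abandoned for this statement.
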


The value $2d + 1$ in Theorem~\ref{t:2d+1} is the least possible.
For example, the barycentric subdivisions of the $d$-skeleton of
a $(2d+2)$-dimensional simplex is not $2d$-representable. Case $d=1$ was
established by Wegner~\cite{wegner67}; general case is proved
in~\cite{tancer11dimensionarxiv}.

The references for Theorem~\ref{t:2d+1} are due to Eckhoff~\cite{eckhoff93}.
(Perel'man rediscovered Wegner's result.) Unfortunately, I have not been able
to check these sources in detail (the first one is in German, the second one is
in Russian). Thus I rather supply an idea of a proof (communicated by Ji\v{r}\'{i} Matou\v{s}ek).

\begin{proof}[Sketch of a proof of Theorem~\ref{t:2d+1}]
Let $\K$ be a $d$-representable complex with $n$ vertices.

A \emph{$k$-neighborly} polytope is a convex polytope such that every $k$
vertices form a face of the polytope. It is well known that there are
$2k$-dimensional $k$-neighborly polytopes with arbitrary number of vertices
for every $k \geq 1$. For instance \emph{cyclic polytopes} satisfy this property.
(See, e.g.,~\cite{matousek02} for a background on convex polytopes including
cyclic polytopes.) 

Let $Q$ be a $(2d + 2)$-dimensional $(d+1)$-neighborly polytope with $n$ vertices. Let $Q^*$ be a
polytope dual to $Q$. It has $n$ facets and any $d+1$ of its facets share
a face of the polytope. Finally, we consider the \emph{Schlegel diagram} of
$Q^*$. The Schlegel diagram of an $m$-dimensional convex polytope is a
projection of the polytope to $(m-1)$-space through a point beyond one of its
facets (the point is very close to the facet). In particular the facets of
$Q^*$ project to convex sets $C_1, \dots, C_n$ in $\R^{2d + 1}$ such that each
$d+1$ of them share the projection of a face of $Q^*$ (on their boundary).
Thus if we look at the nerve $\N$ of $C_1, \dots, C_n$, then it contains full
$d$-skeleton of a simplex with $n$ vertices. Therefore, without loss of generality, we can assume that $\K$ is a subcomplex of $\N$. 
Let $\vartheta = \{C_{i_1}, \dots, C_{i_j}\}$ be a face of $\N$ which does not
belong to $\K$. The sets $C_{i_1}, \dots, C_{i_j}$ intersect on their
boundaries and it is possible to remove their intersection by removing a small
neighborhood of $C_{i_1} \cap \cdots \cap C_{i_j}$ in each of the sets while
keeping the sets convex. Hence only $\vartheta$ and the superfaces of $\vartheta$
disappear from the nerve during this procedure. After repeating the procedure
we obtain a collection of convex sets with the nerve $\K$.

\end{proof}

\subsection{The gap between representability and collapsibility}
For $d=0$ all three notions $0$-representable, $0$-collapsible and $0$-Leray
coincide and they can be replaced with `being a simplex'.

For $d=1$: $1$-representable complexes are clique complexes over interval
graphs; $1$-collapsible and $1$-Leray complexes are clique complexes over
chordal graphs (we remark that results in~\cite{lekkerkerker-boland62,
wegner75} easily imply these statements).

For $d \geq 2$ there is perhaps no simple characterization of
$d$-representable, $d$-collapsible and $d$-Leray complexes.
Wegner~\cite{wegner75} gave an example of complex, which is $2$-collapsible
but not $2$-representable. Matou\v{s}ek and the author~\cite{matousek-tancer09}
found $d$-collapsible complexes that are not $(2d-2)$-representable. Later, the
author~\cite{tancer10} improved this result by finding $2$-collapsible complexes that are not
$d$-representable (for any fixed $d$). We present some steps of both of the
constructions, since even the weaker construction contains some steps of their
own interest.

Let $\E$ be a $(d-1)$-dimensional simplicial complex which is not embeddable in
$\R^{2d-2}$. Such a complex always exist, for example the \emph{van Kampen
complex} $\Delta_{2d}^{(d-1)}$; see~\cite{vankampen32}, or the \emph{Flores
complex}~\cite{flores32}, which is the join of $d$ copies of a set of three
independent points. The first example is the nerve $\N(\E)$. It is
$d$-collapsible but not $(2d-2)$-representable due to the following two
propositions~\cite{matousek-tancer09}.

\begin{proposition}
Let $\K$ be a simplicial complex such that the nerve $\N(\K)$ is
$n$-representable. Then $\K$ embeds in $\R^n$, even linearly.
\end{proposition}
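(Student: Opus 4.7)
The plan is to use the hypothetical $n$-representation of $\N(\K)$ to pick, for each vertex of $\K$, an image point in $\R^n$, and then to verify that the affine extension over each face is an embedding. Throughout, $\K$ is viewed as a collection of subsets of its vertex set $V$, so $\N(\K)$ makes sense as the nerve of this collection.

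Fix a family $\{C_\alpha \subseteq \R^n : \alpha \in \K\}$ realising $\N(\K)$, so $\bigcap_{\alpha \in S} C_\alpha \neq \emptyset$ iff $\bigcap_{\alpha \in S}\alpha \neq \emptyset$ for every $S \subseteq \K$. For each vertex $v \in V$, the subcollection $S_v := \{\alpha \in \K : v \in \alpha\}$ is a face of $\N(\K)$ because $v \in \bigcap_{\alpha \in S_v}\alpha$. By the nerve property, $\bigcap_{\alpha \in S_v}C_\alpha \neq \emptyset$, so I pick $p_v$ from this intersection and define $f\colon V \to \R^n$ by $f(v) = p_v$, extending affinely on each face of $\K$. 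The key geometric observation is that $\conv\{p_v : v \in \alpha\} \subseteq C_\alpha$ for every face $\alpha \in \K$, since each $p_v$ with $v \in \alpha$ lies in $C_\alpha$ by the choice of $p_v$.

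The rest of the argument runs through a single \emph{disjointness principle}: whenever $\alpha, \beta \in \K$ are disjoint as subsets of $V$, the nerve property yields $C_\alpha \cap C_\beta = \emptyset$, and hence $\conv f(\alpha) \cap \conv f(\beta) = \emptyset$. Taking $\alpha = \{v\}$, $\beta = \{w\}$ gives injectivity on vertices. For the affine independence of $f(\alpha)$ on a face $\alpha$, a hypothetical non-trivial affine dependence on $\alpha$ splits by sign of coefficients into disjoint non-empty $P, N \subseteq \alpha$, both of which are faces of $\K$, with $\conv f(P) \cap \conv f(N) \neq \emptyset$ --- contradicting the disjointness principle. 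For the crucial identity $\conv f(\alpha) \cap \conv f(\beta) = \conv f(\alpha \cap \beta)$ for arbitrary $\alpha, \beta \in \K$, I would write a hypothetical common point $q$ as two convex combinations over $\alpha$ and $\beta$, subtract to get an affine relation on $\alpha \cup \beta$ whose coefficients cancel on $\alpha \cap \beta$, and sign-split once more; the positive side $P$ sits in $\alpha$ and the negative side $N$ in $\beta$, so $P, N$ are disjoint faces of $\K$ and the disjointness principle forces the relation to be supported inside $\alpha \cap \beta$, i.e.\ $q \in \conv f(\alpha \cap \beta)$.

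The main obstacle is the bookkeeping in this last step --- carefully checking that after cancellation on $\alpha \cap \beta$, the positive- and negative-coefficient vertices really do land inside $\alpha$ and $\beta$ respectively, so that both sign classes are automatically faces of $\K$ and the disjointness principle applies. This is a routine calculation once the previous pieces are in place.
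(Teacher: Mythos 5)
The survey states this proposition without proof (it only cites the source of the two propositions), so there is no in-paper argument to compare against; your proposal is correct and is essentially the standard argument from that reference: choose $p_v\in\bigcap_{\alpha\ni v}C_\alpha$, extend affinely over each face, and use that disjoint faces of $\K$ yield disjoint convex sets to exclude all unwanted coincidences. The bookkeeping you defer does go through: in the sign-split of $\lambda-\mu$ the positive support lies in $\alpha$ and the negative support in $\beta$, both are (possibly empty) faces of $\K$, so the disjointness principle forces $\lambda=\mu$ and hence $q\in\conv f(\alpha\cap\beta)$, which together with affine independence on each face gives injectivity of the affine extension.
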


\begin{proposition}
Let $\F$ be a family of sets, each of size at most $n$. Then the nerve $\N(\F)$
is $n$-collapsible.
\end{proposition}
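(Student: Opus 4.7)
The plan is induction on $|\F| + n$. The base case $|\F| + n = 0$ gives the trivial nerve; in the inductive step, if some $F \in \F$ is empty I drop it (this does not change $\N(\F)$), so I may assume every set in $\F$ is non-empty. Picking $F_0 \in \F$ of minimum size $m$ with $1 \leq m \leq n$ and setting $\F' = \F \setminus \{F_0\}$, the link $L = \lk_{\N(\F)}(F_0)$ is naturally identified with $\N(\G)$, where $\G = \{F \cap F_0 : F \in \F',\ F \cap F_0 \neq \emptyset\}$, with labels inherited from $\F'$.

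The decisive quantitative observation is that every set in $\G$ has size at most $n - 1$. When $m \leq n - 1$ this is immediate from $|F \cap F_0| \leq m$; when $m = n$, minimality of $m$ forces $|F| = n$ for every $F \in \F$, so for $F \in \F'$ an inclusion $F_0 \subseteq F$ would yield $F = F_0$, which is impossible, giving $|F \cap F_0| < |F_0| = n$. The inductive hypothesis therefore applies to $\G$ at parameter $n - 1$, producing an $(n-1)$-collapsing $L = L_0 \to L_1 \to \cdots \to \emptyset$ with pivots $\sigma_i'$ and unique maximal super-faces $M_i'$.

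I lift this collapsing to $\N(\F)$ via the pivots $\sigma_i := \sigma_i' \cup \{F_0\}$ with super-faces $M_i := M_i' \cup \{F_0\}$. Each $\sigma_i$ satisfies $\dim \sigma_i = |\sigma_i'| \leq n - 1$, and any super-face of $\sigma_i$ in the current $\N(\F)$ necessarily contains $F_0$, so under removal of $F_0$ it corresponds to a super-face of $\sigma_i'$ in the current link; the uniqueness of $M_i'$ thus transports to uniqueness of $M_i$, making each lift a valid elementary $n$-collapse. These lifted collapses remove exactly the faces containing $F_0$, so after the lifted sequence the complex has become $\N(\F')$; the inductive hypothesis then applies to $\F'$ (same $n$, smaller $|\F|$) and finishes the $n$-collapsing. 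The main obstacle is the size bound $|F \cap F_0| \leq n - 1$: without this refinement the lifted pivots would have dimension up to $n$, breaking the $n$-collapse condition, and minimality of $m$ is used essentially in the case $m = n$.
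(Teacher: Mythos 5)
The survey states this proposition without proof (it is quoted from the Matou\v{s}ek--Tancer paper), so I can only judge your argument on its own terms. Your overall strategy is sound and the lifting step is correct: identifying $\lk_{\N(\F)}(F_0)$ with $\N(\G)$, lifting an $(n-1)$-collapsing of the link by adjoining $F_0$ to each pivot, and observing that this clears exactly the star of $F_0$, leaving $\N(\F')$, is a valid way to organize the induction, and your bookkeeping of dimensions and unique maximal faces checks out.

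The gap is in the ``decisive quantitative observation,'' specifically the case $m=n$. For the induction to apply to $\G$ you must state the proposition for \emph{labeled} families (you say so yourself: $\G$ carries labels inherited from $\F'$), and in a labeled family two distinct labels may carry the \emph{same} set. Then the step ``an inclusion $F_0\subseteq F$ would yield $F=F_0$, which is impossible'' fails: $F$ and $F_0$ can be equal as sets while being distinct members of the family, in which case $|F\cap F_0|=n$ and the bound $|F\cap F_0|\le n-1$ is false. This situation is not hypothetical --- it is produced by your own recursion. Start from the honest set family $\F=\{\{1,2\},\{1,3\},\{1,4\}\}$ with $n=2$ and $F_0=\{1,2\}$: the resulting $\G$ consists of two labeled copies of $\{1\}$, and at the next level (parameter $n-1=1$, with $m=1=n$) every choice of $G_0$ yields another label carrying the identical set, so the inductive hypothesis at parameter $0$ cannot be invoked. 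The statement you need (that the link is still $(n-1)$-collapsible) happens to be true, but not by your argument: the labels $F\in\F'$ with $F\supseteq F_0$ contribute cone points, so the link splits as a join of a full simplex with the nerve of the genuinely smaller sets, and you would need an additional lemma that joining with a full simplex (equivalently, repeated coning) preserves $d$-collapsibility --- or some other device for disposing of duplicated and nested sets before choosing $F_0$. Without one of these extra ingredients the induction does not close.
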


The second (stronger) example regards finite projective planes seen as
simplicial complexes. Let $(P, \mathcal L)$ be a finite projective plane, where
$P$ is the set of its points and $\mathcal L$ is the set of its lines. There is
a natural simplicial complex $\P$ associated to the projective plane. Its
ground set is $P$ and faces are the collections of points lying on a common
line.

It is not hard to show that $\P$ is $2$-collapsible. Non-representability of
$\P$ is summarized in the following theorem~\cite{tancer10}.

\begin{theorem}
\label{t:npp}
For every $d \in \mathbb N$ there is a $q_0 = q_0(d)$ such that if a complex $\P$ correspond to projective plane of order $q \geq q_0$ then $\P$ is not $d$-representable.
\end{theorem}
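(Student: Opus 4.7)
The plan is to argue by contradiction: assume that $\P$ is $d$-representable by a family $\{C_p\}_{p\in P}$ of convex sets in $\R^d$, and then exploit the rich incidence structure of the projective plane of large order to reach a contradiction. First, for each line $\ell\in\mathcal{L}$, since $\ell$ is a face of $\P$, the intersection $\bigcap_{p\in\ell} C_p$ is nonempty; I would pick a witness $x_\ell$ in it, obtaining $N=q^2+q+1$ points in $\R^d$. For each point $p\in P$, the \emph{pencil polytope} $P_p:=\conv\{x_\ell:\ell\ni p\}$ has $q+1$ vertices and is contained in $C_p$ by convexity, so the family $\{P_p\}_{p\in P}$ inherits all the intersection constraints imposed by the nerve.

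The next observation is a key structural upper bound: if a point $y\in\R^d$ lies in $C_{p_1},\dots,C_{p_t}$, then $\{p_1,\dots,p_t\}$ is a face of $\P$, hence contained in a single line of the projective plane, and so $t\leq q+1$. Consequently, to obtain a contradiction it suffices to produce a point in $\R^d$ covered by strictly more than $q+1$ of the $C_p$'s. To this end I would apply a quantitative Helly-type or selection-type result to the family $\{P_p\}_{p\in P}$: any two polytopes $P_p,P_{p'}$ share at least the vertex $x_\ell$ corresponding to the unique line $\ell$ through $p$ and $p'$, and for every line $\ell$ the $q+1$ polytopes in its pencil already share $x_\ell$. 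The hope is that this rich pairwise and ``pencil-wise'' intersection structure forces some point of $\R^d$ into more than $q+1$ pencil polytopes, which, as noted, is forbidden.

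The main obstacle is the quantitative step. The $(d+1)$-wise intersections inside $\{P_p\}$ correspond precisely to collinear $(d+1)$-tuples of $P$, whose density among all $(d+1)$-subsets of $P$ is only $\Theta(q^{1-d})$; for $d\geq 2$ this density tends to zero, so a plain fractional Helly theorem in $\R^d$ is too weak, and the naive bound it produces falls short of the threshold $q+1$. One must tailor the counting to the highly regular incidence geometry of projective planes, for instance by pigeonholing the $\binom{N}{2}$ pairs over the $q^2+q+1$ lines, by using a first or second selection lemma adapted to the pencil structure, or by invoking a stronger colorful or topological Helly-type statement specific to $d$-representable families. The threshold $q_0(d)$ would then arise from balancing the lower bound produced by the chosen selection step against the ceiling $q+1$ coming from the nerve constraint, and one expects $q_0(d)$ to grow with $d$ at a rate dictated by that balance.
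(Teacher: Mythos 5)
Your setup coincides with the paper's: you pick a witness point $x_\ell$ in $\bigcap_{p\in\ell}C_p$ for each line $\ell$, and you isolate the correct ceiling — a common point of $C_{p_1},\dots,C_{p_t}$ forces $\{p_1,\dots,p_t\}$ to be a face of $\P$, hence a collinear set, hence $t\le q+1$. But the argument stops exactly at the step that carries all the content: actually producing a point of $\R^d$ lying in more than $q+1$ of the $C_p$. You correctly diagnose that plain fractional Helly is too weak (the density of collinear $(d+1)$-tuples is $\Theta(q^{1-d})$) and then list several candidate tools ("pigeonholing", "a first or second selection lemma", "a stronger colorful or topological Helly-type statement") without committing to or executing any of them. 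As written, this is a plan with the decisive lemma missing, not a proof.

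The paper closes the gap with two concrete ingredients. First, Pach's homogeneous selection theorem: applied to the $N=q^2+q+1$ witness points (which may be assumed in general position since the $C_p$ can be taken open), it produces a point $a\in\R^d$ and disjoint subsets $Z_1,\dots,Z_{d+1}$ of the witness set, each of size at least $c(d)N$, such that the convex hull of \emph{every} transversal of $(Z_1,\dots,Z_{d+1})$ contains $a$. Second, a covering estimate for projective planes due to Alon: any $k$ lines cover all but at most $(q^2+q+1)^{3/2}/k$ of the points. Hence for each fixed $j$, all but $O_d(q)$ of the points $p\in P$ lie on some line $\ell$ with $x_\ell\in Z_j$, i.e.\ all but $O_d(q)$ of the sets $C_p$ contain a point of $Z_j$. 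Any $C_p$ meeting all of $Z_1,\dots,Z_{d+1}$ contains a transversal and therefore, by convexity, contains $a$. So $a$ lies in at least $q^2+q+1-(d+1)\cdot O_d(q)$ of the $C_p$, which exceeds $q+1$ once $q\ge q_0(d)$ — the desired contradiction. Note also that your pencil polytopes $P_p$ play no role in this scheme; the argument runs directly on the convex sets $C_p$ and the witness points.
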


We also sketch a proof of Theorem~\ref{t:npp}. The reader is referred to the
original paper for full details. In contrast to the original paper we present it in an `inequality form'. For this we need few preliminaries.

One of important ingredients is a selection theorem by Pach~\cite{pach98}.

\begin{theorem}
\label{t:pach}
For every positive integer $d$ there is a constant $c = c(d) > 0$ with the
following property. Let $X \subset \R^d$ be a finite set of points in general
position. Then there is a point $a \in \R^d$ and disjoint disjoint subsets
$Z_1, \dots, Z_{d+1}$ of $X$, with $|Z_i| \geq c|X|$ such that the convex hull
of every transversal of $(Z_1, \dots, Z_{d+1})$ contains $a$.
\end{theorem}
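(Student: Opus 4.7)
The plan is to combine two ingredients: Bárány's First Selection Lemma, which produces a candidate point $a$, with a topological partitioning argument (of ham--sandwich / Borsuk--Ulam flavor) which extracts the $d+1$ color classes having the required transversal property.

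First I would apply Bárány's First Selection Lemma: for $n=|X|$ points in general position in $\R^d$, there exists a point $a\in\R^d$ contained in at least $\beta_d\binom{n}{d+1}$ of the closed $(d+1)$-simplices spanned by $X$, where $\beta_d>0$ depends only on $d$. Translate coordinates so that $a$ becomes the origin; the task is now to find disjoint $Z_1,\dots,Z_{d+1}\subseteq X$ of linear size such that every transversal $(x_1,\dots,x_{d+1})$ with $x_i\in Z_i$ satisfies $0\in\conv\{x_1,\dots,x_{d+1}\}$.

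Next I would pass to the unit sphere by central projection: for $x\in X$ set $\hat x:=x/\lVert x\rVert\in S^{d-1}$. By a standard Carathéodory-type argument, $0\in\conv\{x_1,\dots,x_{d+1}\}$ iff the unit vectors $\hat x_1,\dots,\hat x_{d+1}$ form a \emph{positive basis} of $\R^d$, i.e.\ there are positive coefficients $\lambda_i>0$ with $\sum\lambda_i\hat x_i=0$. Thus the geometric problem is reduced to partitioning $S^{d-1}$, equipped with the counting measure $\mu$ induced by $\{\hat x:x\in X\}$, into $d+1$ regions $R_1,\dots,R_{d+1}$ such that (i) $\mu(R_i)\geq c(d)\,n$ for every $i$, and (ii) any choice of one unit vector from each $R_i$ is a positive basis.

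For (ii) the natural model is to take unit vectors $u_1,\dots,u_{d+1}$ forming the vertices of a regular $d$-simplex centered at $0$ (so $\sum u_i=0$), and to take the $R_i$'s to be Voronoi-like caps around the $u_i$'s that are narrow enough so that any transversal is still a positive basis; such caps exist because the positive-basis condition is open. The difficulty is matching this with (i): a fixed narrow-cap partition may miss most of $X$. The strategy is therefore to choose the center directions $u_1,\dots,u_{d+1}$ (equivalently, an orthogonal frame together with a scaling) using a Borsuk--Ulam / ham--sandwich argument on a product of spheres so that the corresponding caps simultaneously balance $\mu$. Here the density guaranteed by the First Selection Lemma is decisive: it forces the directional distribution of $X$ on $S^{d-1}$ to be ``spread out'' enough that such a balanced partition exists; without this density one could not hope to make all caps carry $\Omega(n)$ points.

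The hardest step is clearly the partitioning: the positive-basis condition for a transversal couples the $d+1$ classes non-trivially, so an arbitrary balanced partition will not do, while an arbitrary narrow-cap partition will not be balanced. I expect the main obstacle to be verifying that a suitable topological equipartition exists with the clean quantitative conclusion $|Z_i|\geq c(d)\,n$; the rest of the argument (Carathéodory reduction to positive bases, openness of the positive-basis condition, translation to the origin) is routine.
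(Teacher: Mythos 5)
First, note that the survey does not prove Theorem~\ref{t:pach} at all: it is imported as a black box from Pach's paper~\cite{pach98}, so there is no in-paper argument to measure your proposal against and it must be judged on its own. Judged that way, it has a genuine gap at precisely the step you defer: the existence of the balanced cap partition is never established, and the heuristic you offer for it is false. You claim that the density coming from the First Selection Lemma forces the directional distribution of $X$ around $a$ to be spread out on $S^{d-1}$. It does not. Take $d=2$ and let $X$ consist of $n/4$ points in a ball of radius $\delta^{2}$ around $(1,\delta)$, another $n/4$ in such a ball around $(1,-\delta)$, and $n/2$ in such a ball around $(-1,0)$. The origin lies in all $\Theta(n^{3})$ rainbow triangles of these three clusters, so it is a perfectly valid output of the First Selection Lemma; yet the directions from the origin to the points of $X$ occupy two antipodal arcs of length $O(\delta)$. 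In your model every transversal is a positive basis only if the caps have angular half-width below $30^{\circ}$ and centers $120^{\circ}$ apart, so the three caps are pairwise disjoint with gaps exceeding $60^{\circ}$; each of the two tiny arcs then meets at most one cap, and hence some cap receives no point of $X$ at all, for every rotation of the frame. (The conclusion of the theorem does hold for this configuration, but only for a different point $a$, squeezed up against the two right-hand clusters at distance $\Theta(\delta)$ from them; B\'ar\'any's lemma gives you no control that would let you find that point, so $a$ and the partition would have to be constructed simultaneously, and your proposal contains no mechanism for that.)

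This is not a presentational defect but the entire content of the theorem: producing \emph{linear-size} classes with uniform transversal behaviour is the hard part, and it is not achieved by an equipartition argument over a finite-dimensional family of cap configurations. The known proofs are essentially combinatorial: Pach's original argument runs through a weak regularity lemma for uniform hypergraphs, and the now-standard route (see, e.g., \cite{matousek02}) combines the First Selection Lemma with Erd\H{o}s--Simonovits supersaturation and the same-type lemma of B\'ar\'any and Valtr, which is the tool that actually delivers linear-size classes; the resulting $Z_i$ are in general not preimages of caps under radial projection from $a$. Your reductions (translating $a$ to the origin, the Carath\'eodory-type equivalence with positive bases, openness of that condition) are correct but only rephrase the statement; the theorem itself remains unproved in your write-up.
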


We recall that a \emph{transversal} of a group of sets $(Z_1, \dots, Z_{d+1})$
is a set $\{z_1, \dots, z_{d+1}\}$ such that $z_i \in Z_i$. 

Now we let
$\mathcal S$ to be a subset of a simplicial complex $\K$ with a vertex set $V$
where 
$V = \{1, \dots, n\}$ and $|\mathcal S| = s$. For $\mathcal S' \subset \mathcal
S$ we define the \emph{deficiency} of $\mathcal S'$ as the number of vertices
of $\K$ which are not contained in any element of $\mathcal S'$, i.e., the value
$n - |\bigcup \mathcal S'|$. A value $\rho(k)$ of a
function $\rho\colon \{1, \dots, n \} \rightarrow \mathbb N$ is defined as the
maximum of deficiencies of sets $\mathcal S' \subseteq \mathcal S$ with $k$
elements. 
Then we
have the following inequality.

\begin{proposition}
\label{p:ineq}
If $\K$ is $d$-representable, then
$$
n - (d+1)\rho(c(d)s) \leq \dim \K + 1,
$$
where $c(d)$ is the Pach's constant.
\end{proposition}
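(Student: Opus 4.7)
The plan is to fix a $d$-representation $\{C_v\}_{v \in V}$ of $\K$, extract one representative point from each face in $\mathcal{S}$, and use Pach's selection theorem to produce a single point $a \in \R^d$ that witnesses the non-emptiness of a very large sub-intersection, which must then be a face of $\K$. Concretely, for each $\sigma \in \mathcal{S}$ I choose a point $p_\sigma \in \bigcap_{v \in \sigma} C_v$; such a point exists because $\sigma$ is a face of the nerve. After a generic perturbation, which I return to at the end, I may assume that the $s$ points $\{p_\sigma : \sigma \in \mathcal{S}\}$ are in general position in $\R^d$.

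Next I apply Theorem~\ref{t:pach} to these $s$ points, obtaining $a \in \R^d$ and pairwise disjoint subsets $Z_1, \dots, Z_{d+1}$ of them, each of size at least $c(d)\, s$, such that $a$ lies in the convex hull of every transversal. Set $\mathcal{S}_i := \{\sigma \in \mathcal{S} : p_\sigma \in Z_i\}$ and $V_i := \bigcup \mathcal{S}_i$. Because $|\mathcal{S}_i| \geq c(d)\, s$ and deficiency is monotone non-increasing with respect to inclusion of subfamilies, $|V \setminus V_i| \leq \rho(c(d)\, s)$; a union bound then yields $|V^*| \geq n - (d+1)\,\rho(c(d)\, s)$, where $V^* := V_1 \cap \cdots \cap V_{d+1}$.

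The heart of the argument is to show that $V^*$ is a face of $\K$. Fix $v \in V^*$; for each $i \in \{1, \dots, d+1\}$ the condition $v \in V_i$ produces some $\sigma_i \in \mathcal{S}_i$ with $v \in \sigma_i$, and hence $p_{\sigma_i} \in C_v$. Since $(\sigma_1, \dots, \sigma_{d+1})$ is a transversal of $(\mathcal{S}_1, \dots, \mathcal{S}_{d+1})$, Pach's conclusion combined with the convexity of $C_v$ forces $a \in \conv(p_{\sigma_1}, \dots, p_{\sigma_{d+1}}) \subseteq C_v$. This holds for every $v \in V^*$, so $a \in \bigcap_{v \in V^*} C_v$ and therefore $V^*$ is a face of $\K$. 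Consequently $|V^*| \leq \dim \K + 1$, which combined with the lower bound above yields the claimed inequality.

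The main obstacle is the general-position hypothesis of Theorem~\ref{t:pach}: the raw points $p_\sigma$ need not be generic, and a naive perturbation can push them outside the corresponding sets $C_v$, breaking the crucial step $p_{\sigma_i} \in C_v$. My preferred fix is to replace each $C_v$ by its $\delta$-thickening $C_v^\delta$ (Minkowski sum with a ball of radius $\delta$), which remains convex and accommodates small perturbations of $p_\sigma$; the argument above then gives $a_\delta \in \bigcap_{v \in V^*_\delta} C_v^\delta$ for every $\delta > 0$. Since there are only finitely many choices of $V^*_\delta \subseteq V$, I would let $\delta \to 0$ along a subsequence on which $V^*_\delta$ is constant and extract a convergent subsequence of $a_\delta$ (using compactness of the $C_v$, which may be arranged without loss of generality) to obtain a common point in the original intersection $\bigcap_{v \in V^*} C_v$.
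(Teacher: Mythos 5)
Your proof is correct and follows essentially the same route as the paper's: witness points for the faces in $\mathcal S$, Pach's selection theorem, the deficiency bound for each $Z_i$, and convexity forcing $a$ into every $C_v$ with $v \in V^*$, so that $V^*$ is a face of dimension at least $n-(d+1)\rho(c(d)s)-1$. The only divergence is in handling general position: the paper assumes the sets are open and perturbs the witness points inside the open intersections, while you thicken the sets and pass to a limit --- both resolve the same technicality, yours just a bit more laboriously.
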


\begin{proof}[Sketch of a proof.]
Let $C_1, \dots, C_n$ be the sets forming the $d$-representation of $\K$, set
$C_i$ corresponds to a vertex $i$. It can be assumed that these sets are open.
For every $\sigma \in \mathcal S$ there is a point $x_{\sigma}$ in the
intersection of all $C_i$ such that $i$ is a vertex of $\sigma$. Let $X =
\{x_{\sigma}: \sigma \in \mathcal S\}$. It can be assumed that $X$ is in
general position due to the openness of the sets $C_i$. So we have $Z_1, \dots,
Z_{d+1} \subseteq X$ and $a \in \R^d$ from Theorem~\ref{t:pach}. For a fixed $j
\in \{1, \dots, d+1\}$ the definition of $\rho$ implies that only $\rho(c(d)s)$ sets among $C_1, \dots, C_n$ can avoid the points of the set
$Z_j$. Thus there is at least $n - (d+1)\rho(c(d)s)$ of the $C_i$ that
meat all $Z_j$, and therefore they contain $a$. Hence the vertices of $\K$
corresponding to these $C_i$ form a face of $\K$ of dimension $n - (d+1)
\rho(c(d)s) - 1$.
\end{proof}

Theorem~\ref{t:npp} follows from Proposition~\ref{p:ineq} when $\mathcal S$ is
set of all maximal simplices of a projective plane $\P$. Then $n = s = q^2 + q +
1$, $\dim P = q + 1$, and $\rho(k) \leq (q^2 + q + 1)^{3/2}/k$ by a theorem
of Alon~\cite{alon85, alon86}.

\subsection{The gap between collapsibility and Leray number}
Wegner showed an example of complex which is $2$-Leray but not $2$-collapsible,
namely a triangulation $\D$ of the dunce hat. If we consider the multiple join
$\D \star \cdots \star \D$ of $d$ copies of $\D$, we obtain a complex which is
$2d$-Leray but not $(3d - 1)$-collapsible. See~\cite{matousek-tancer09} for
more details.

\section{Algorithmic perspective}
\label{s:algo}
As we consider different criteria for $d$-representability, it is also natural
to ask whether there is an algorithm for recognition $d$-representable
complexes. We denote this algorithmic question as \algr. More precisely, the
input of this question is a simplicial complex. The size of the input is the number
of faces of the complex. The value $d$ is considered as a fixed integer. The
output of the algorithm is the answer whether the complex is $d$-representable.

We can also ask similar questions for $d$-collapsible and $d$-Leray complexes
as relaxations of the previous problem. Thus we have algorithmic problems \algc
\ and \algl.

\subsection{Representability} 
The first mentioned problem \algr \ is perhaps the
most difficult among the three algorithmic questions. It is NP-hard for $d \geq
2$. Reduction can be done in a very similar fashion as a reduction for hardness of
recognition intersection graphs of segments~\cite{kratochvil-matousek89,
kratochvil-matousek94}. Full details can be found in \cite{tancer10np}. On the
other hand it is not hard to see that there is a PSPACE algorithm for \algr. It
is based on solving systems of polynomial inequalities.
See~\cite[Theorem 1.1(i)(a)]{kratochvil-matousek94} for a very similar reduction.

\subsection{Collapsibility} 
It is shown in~\cite{tancer10np} that \algc \ is
NP-complete for $d \geq 4$ and it is polynomial time solvable for $d \leq 2$.
For $d = 3$, the problem remains open.

\subsection{Leray number} 
The last question, \algl, is polynomial time solvable.
An equivalent characterization of $d$-Leray complexes is when induced
subcomplexes are replaced with \emph{links} of faces (including an empty face).
See~\cite[Proposition 3.1]{kalai-meshulam06} for a proof. The tests on links can be done in polynomial time since it is sufficient to test the homology up to the dimension of the complex.  

\subsection{Greedy collapsibility}
The algorithmic results above suggest that it is easier to test/compute the
Leray number than collapsibility. However, if we are interested in
them because of a hint for representability, computing collapsibility 
still can be more convenient, since $d$-collapsibility is closer to
$d$-representability than the Leray number. 
An example from section~\ref{s:gaps} is perhaps not so convincing;
however, there is a more important example. As it is shown in
section~\ref{s:gc}, $d$-collapsibility can distinguish collections of 
convex sets and good covers.

An useful tool for computation could be greedy $d$-collapsibility.
We say that a
simplicial complex $\K$ is \emph{greedily $d$-collapsible} if it is
$d$-collapsible and any sequence of $d$-collapses of $\K$ ends up in 
a complex which is still $d$-collapsible. In other words greedy
collapsibility allows us to collapse the faces of $\K$ 
in whatever order without risk of a bad choice. Thus, if a complex is greedily
$d$-collapsible, then there is a simple (greedy) algorithm for showing that it
is $d$-collapsible.
Not all $d$-collapsible complexes are greedily $d$-collapsible.
Complexes which are not greedily
$d$-collapsible for $d \geq 3$ are constructed in~\cite{tancer10np}. However, none of these complexes is
$d$-representable.
In summary there is a hope for obtaining a simple algorithm for showing that a
complex is either $d$-collapsible or it is not $d$-representable if the answer
to the following question is true.

\begin{problem}
Is it true that every $d$-representable simplicial complex is greedily
$d$-collapsible?
\end{problem}

\section{Good covers}
\label{s:gc}
A \emph{good cover}
in $\R^d$ is a collection of open sets in $\R^d$ such that the intersection of
any subcollection is either empty or a contractible (in particular, the sets in
the collection are contractible).\footnote{The definition of a good cover is
not fully standard in the literature. For example, it may be assumed that sets
in the collection are closed instead of open, or that the intersections are
homeomorphic to (open) balls instead of contractible. These differences are
not essential for the most of the purposes mentioned here, because all these
options satisfy the assumptions of a nerve theorem (see the text bellow).} We consider only finite good covers. A simplicial complex is \emph{topologically $d$-representable} if it is isomorphic to the nerve of a (finite) good cover in $\R^d$. We should emphasize that (for our purposes) a good cover need not cover whole $\R^d$.

Topologically $d$-representable complexes generalize $d$-representable
complexes since every collection of convex sets is a good cover.





\subsection{Nerve theorems}
Suppose we are given a collection $\F$ of subsets of $\R^d$. If the sets are
``sufficiently nice'' and also all their intersections are sufficiently nice
then the nerve of the collection, $\N(\F)$, is homotopy equivalent to the union of
the sets in the collection, $\bigcup \F$. For a weaker assumption on
``sufficiently nice'', it is possible to derive not necessarily homotopy
equivalence, but at least equivalence on homology (up to some level). Such
results are known as \emph{homotopic/homological} nerve theorems.

We mention here a one of possible versions (suitable for our
purposes); see~\cite[Corollary 4G.3]{hatcher01}.  

\begin{theorem}[A homotopy nerve theorem]
\label{t:nerve}
Let $\F$ be a collection of open contractible sets in a paracompact space $X$
such that $\bigcup \F = X$ and every nonempty intersection of finitely many
sets in $\F$ is contractible (or empty). Then the nerve $\N(\F)$ and $X$ are
homotopy equivalent.
\end{theorem}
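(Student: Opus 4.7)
The plan is to interpolate between $X$ and $|\N(\F)|$ through an intermediate space $Y$ that admits homotopy equivalences to both; then $X \simeq Y \simeq |\N(\F)|$. The natural candidate is the Mayer--Vietoris blowup (equivalently, the homotopy colimit of the diagram of intersections): for each face $\sigma = \{F_0, \dots, F_k\}$ of $\N(\F)$, write $F_\sigma := F_0 \cap \cdots \cap F_k$, and set
\[
Y := \Big(\bigsqcup_{\sigma \in \N(\F)} F_\sigma \times \Delta^\sigma \Big) \Big/ \sim,
\]
where the identifications glue $F_\sigma \times \Delta^\tau$ (for $\tau \subseteq \sigma$) into $F_\tau \times \Delta^\tau$ along the inclusion $F_\sigma \hookrightarrow F_\tau$. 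There are two evident projections $p : Y \to X$ (to the first factor) and $q : Y \to |\N(\F)|$ (to the second factor), and the goal is to show both are homotopy equivalences.

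For the map $q$, the preimage of the relative interior of a face $\sigma$ of $|\N(\F)|$ is a product of $F_\sigma$ with that open simplex, and $F_\sigma$ is contractible by hypothesis. I would assemble these local equivalences into a global one by induction over the skeleta of $|\N(\F)|$, at each stage attaching $F_\sigma \times \Delta^\sigma$ along the already-glued $F_\sigma \times \partial \Delta^\sigma$; since the attaching maps are cofibrations and their targets are contractible after the earlier identifications, the standard gluing lemma for homotopy equivalences of mapping cylinders delivers the conclusion. This part of the argument uses only the contractibility of nonempty finite intersections and makes no use of the topology of $X$.

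For the map $p$, paracompactness of $X$ enters crucially: it provides a partition of unity $\{\phi_F\}_{F \in \F}$ subordinate to the cover $\F$. This partition defines a section $s : X \to Y$ by sending a point $x$ to the element of $F_{\sigma(x)} \times \Delta^{\sigma(x)}$ with first coordinate $x$ and barycentric coordinates $(\phi_F(x))_{F \in \sigma(x)}$, where $\sigma(x) := \{F \in \F : \phi_F(x) > 0\}$ is finite by local finiteness. Then $p \circ s = \mathrm{id}_X$ on the nose, while $s \circ p$ is homotopic to $\mathrm{id}_Y$ by a fiberwise straight-line homotopy: each fiber $p^{-1}(x)$ sits inside a convex simplex, and the line from a given point to $s(p(\cdot))$ remains inside this simplex.

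The main obstacle I expect is making the induction that establishes $q$ fully rigorous -- one needs to check that the gluing data at each skeletal stage satisfies the hypotheses of the homotopy-equivalence patching lemma, and that the contractibility of $F_\sigma$ interacts correctly with the already-constructed identifications on $F_\sigma \times \partial \Delta^\sigma$. The cleanest way is to appeal directly to a standard result (the one in \cite[\S4G]{hatcher01}) asserting that a map with contractible homotopy fibers over every simplex of a CW target is a homotopy equivalence; once this is invoked, the two equivalences compose to give the theorem.
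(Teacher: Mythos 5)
The paper does not prove this theorem itself; it simply cites \cite[Corollary 4G.3]{hatcher01}, and your argument is precisely the proof given there: the Mayer--Vietoris blowup $\Delta X$ with the two projections, contractibility of the pieces giving the equivalence to $|\N(\F)|$ by skeletal induction, and a partition of unity (from paracompactness) giving the section and fiberwise linear homotopy for the equivalence to $X$. The proposal is correct and is essentially the same approach as the paper's cited source.
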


\begin{corollary}
\label{c:gcl}
The nerve of every good cover is $d$-Leray.
\end{corollary}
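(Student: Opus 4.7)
The plan is to reduce the statement to the homotopy nerve theorem (Theorem~\ref{t:nerve}) together with the standard topological fact that open subsets of $\R^d$ have no (rational) homology in dimensions $\geq d$.

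First I would fix a good cover $\F$ in $\R^d$ and an induced subcomplex $\L \leq \N(\F)$. By definition of the nerve, the vertex set of $\L$ corresponds to a subcollection $\F' \subseteq \F$, and the faces of $\L$ are precisely those subcollections of $\F'$ with nonempty common intersection. Hence $\L = \N(\F')$. The key observation is that $\F'$ is itself a good cover of the open set $X' := \bigcup \F' \subseteq \R^d$: the sets in $\F'$ are open and contractible, any nonempty intersection of members of $\F'$ is also a nonempty intersection of members of $\F$ and thus contractible, and clearly $\F'$ covers $X'$. As an open subset of $\R^d$, the space $X'$ is paracompact.

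Next I would apply Theorem~\ref{t:nerve} to the good cover $\F'$ of $X'$ to conclude that $\L = \N(\F')$ is homotopy equivalent to $X'$. In particular, $\tilde{H}_i(\L) \cong \tilde{H}_i(X')$ for every $i$. Now I invoke the fact that for any open subset $U$ of $\R^d$ one has $\tilde{H}_i(U; \Q) = 0$ for all $i \geq d$: for $i > d$ this holds because $U$ has the homotopy type of a CW complex of dimension at most $d$, and for $i = d$ it holds because $U$ is a non-compact $d$-manifold (when non-empty), so its top-dimensional homology vanishes. Combining this with the previous displayed isomorphism yields $\tilde{H}_i(\L) = 0$ for every $i \geq d$.

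Since $\L$ was an arbitrary induced subcomplex of $\N(\F)$, this shows that $\N(\F)$ is $d$-Leray. I do not anticipate any real obstacle here; the only nontrivial ingredients, namely the homotopy nerve theorem and the vanishing of high-dimensional homology for open subsets of $\R^d$, are standard, and the argument is essentially a bookkeeping exercise. If one wanted to avoid appealing to the non-compact manifold fact, the same conclusion could be reached by noting that every open subset of $\R^d$ is homotopy equivalent to a $d$-dimensional simplicial complex (e.g.\ via a triangulation adapted to a CW decomposition), which again forces $\tilde{H}_i = 0$ for $i \geq d$.
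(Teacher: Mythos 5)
Your argument is correct and is precisely the route the paper intends: the corollary is stated as an immediate consequence of Theorem~\ref{t:nerve}, applied to the subcollection $\F'$ corresponding to an induced subcomplex, combined with the standard fact that a (nonempty) open subset of $\R^d$ is a non-compact $d$-manifold and hence has vanishing reduced $\Q$-homology in dimensions $\geq d$. The paper leaves these bookkeeping steps implicit; your write-up simply makes them explicit.
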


\subsection{Good covers versus collections of convex sets}
Good covers have many similar properties as collections of convex sets. Many
results on intersection patterns of convex sets can be generalized for good
covers. We will discuss these generalizations in the following section. An
exceptional case is Theorem~\ref{t:npp} which cannot be generalized for good
covers.

On the other hand it is not hard to see that topologically $d$-representable
complexes are strictly more general than $d$-representable complexes for $d
\geq 2$. There is a less trivial example on Figure~\ref{f:ce} showing that
there is a complex which is topologically $d$-representable but not
$d$-collapsible. Originally, Wegner conjectured that there is no such example.
See~\cite{tancer11counterex} for more details.

\begin{figure}
\begin{center}
\includegraphics{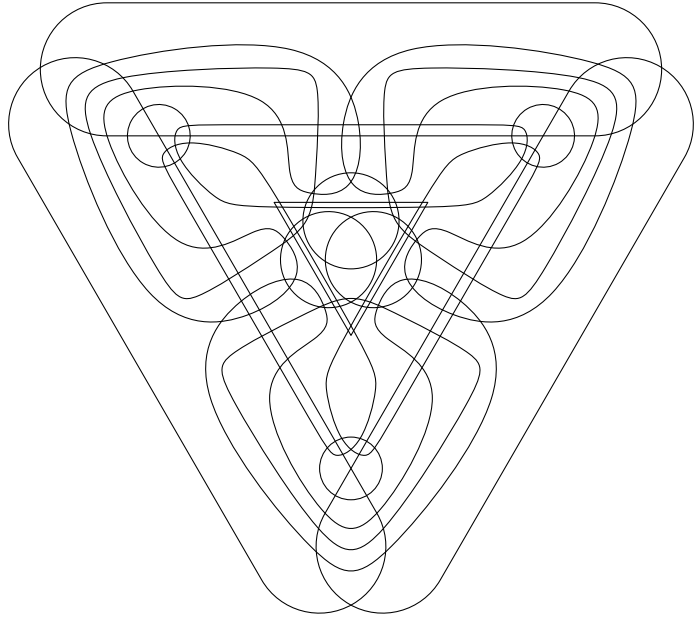}
\end{center}
\caption{A good cover $\F$ such that the nerve of $\F$ is even not
$d$-collapsible.}
\label{f:ce}
\end{figure}

In addition, this example also distinguishes good covers in $\R^2$ and
arrangements of convex sets in a topological plane:\footnote{This observation
is by Xavier Goaoc.} An \emph{arrangement of pseudolines} is a set of curves (called \emph{pseudolines}) in the plane such
that every two pseudolines intersect in exactly one point. The most convenient
way is perhaps to think of the plane as a subset of the real projective plane;
then we can allow even ``parallel'' pseudolines.  Such an arrangement can be
extended to a \emph{topological plane} where there is a pseudoline through
every pair of points. See, e.g.,~\cite{goodman-pollack-wenger-zamfirescu94} for
more precise definitions and another background. A \emph{convex set in a
topological plane} is such a subset that every two points are connected with a
``segment'' of a pseudoline. The nerve of a bounded collection of convex sets
in a topological plane is 2-collapsible. This can be shown in a very similar
way as Wegner's theorem on inclusion of $d$-representable complexes in
$d$-collapsible ones. Thus the example from Figure~\ref{f:ce} cannot be
a collection of convex sets in a topological plane.


\section{Helly type theorems}
\label{s:thm}

In this section we overview some Helly-type results on intersection patterns of
convex sets. We always start with geometrical formulation. Then we reformulate
such a result via $d$-representable simplicial complexes. 
We also discuss possible extensions to $d$-collapsible
or $d$-Leray complexes. (The former ones then have geometric consequences for good covers.)

\subsection{The Helly theorem.} 
For completeness of this section we also recall the
Helly theorem mentioned in the introduction.

We have the following geometric formulation.

\begin{theorem}[Helly, \cite{helly23}]
\label{t:hgeo}
If $C_1, \dots, C_n$ are convex sets
in $\R^d$, $n \geq d + 1$, and any collection of $d+1$ sets among $C_1, \dots,
C_n$ has a nonempty intersection, then all the sets have a common point.
\end{theorem}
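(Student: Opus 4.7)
The plan is to give the classical proof of Helly's theorem by induction on $n$, using Radon's partition theorem as the key combinatorial tool. The base case $n = d+1$ is just the hypothesis. For the inductive step with $n \geq d+2$, assuming the theorem for collections of size $n-1$, I would apply the inductive hypothesis to each of the $n$ subfamilies $\{C_j : j \neq i\}$ to obtain, for each $i \in \{1,\dots,n\}$, a point $p_i \in \bigcap_{j \neq i} C_j$.

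Next I would invoke Radon's theorem, which asserts that any set of $d+2$ (or more) points in $\R^d$ admits a partition into two subsets whose convex hulls intersect. Applying this to $\{p_1,\dots,p_n\}$ produces a partition of the index set $\{1,\dots,n\}$ into $I_1 \sqcup I_2$ and a point $p \in \conv\{p_i : i \in I_1\} \cap \conv\{p_i : i \in I_2\}$. I would then verify the claim $p \in C_k$ for every $k$. Fix $k$; without loss of generality $k \in I_1$. Then for every $i \in I_2$ we have $i \neq k$, hence $p_i \in C_k$ by construction of $p_i$. By convexity of $C_k$, the point $p$, being a convex combination of points in $C_k$, lies in $C_k$. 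Thus $p$ lies in every $C_k$, completing the induction.

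Since Radon's theorem may not have been stated in the excerpt, I would include a brief justification: given points $p_1, \dots, p_{d+2} \in \R^d$, the homogeneous system consisting of $\sum \lambda_i p_i = 0$ and $\sum \lambda_i = 0$ has $d+1$ equations in $d+2$ unknowns, hence a nontrivial solution $(\lambda_1,\dots,\lambda_{d+2})$. Letting $I_1 = \{i : \lambda_i > 0\}$ and $I_2 = \{i : \lambda_i \leq 0\}$, and setting $S = \sum_{i \in I_1} \lambda_i = -\sum_{i \in I_2} \lambda_i > 0$, the point $\sum_{i \in I_1}(\lambda_i/S) p_i = \sum_{i \in I_2}(-\lambda_i/S) p_i$ lies in both convex hulls.

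The only mildly delicate step is the verification that the Radon point $p$ lies in every $C_k$, but this is immediate once one observes that the roles of $I_1$ and $I_2$ ensure $p_i \in C_k$ for all $i$ on the side of the partition opposite to $k$. No real obstacle arises; the main conceptual content is recognizing that Radon's theorem provides precisely the combinatorial mechanism needed to boost the trivial base case to arbitrary $n$.
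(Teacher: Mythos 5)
Your proof is correct, but it takes a genuinely different route from the paper. The survey does not prove Helly's theorem directly: it derives it from the structural chain ``$d$-representable $\Rightarrow$ $d$-collapsible $\Rightarrow$ $d$-Leray'' (Wegner's sliding-hyperplane argument, or alternatively the nerve theorem), combined with the essentially trivial observation that a $d$-Leray complex has Helly number at most $d+1$, i.e., cannot contain an induced $k$-dimensional simplicial hole for $k \geq d$ (this is Theorem~\ref{t:htopo}, applied to $d$-representable complexes). Your argument is the classical self-contained one: induction on $n$ with Radon's partition theorem supplying the point $p \in \conv\{p_i : i \in I_1\} \cap \conv\{p_i : i \in I_2\}$, and the verification that $p$ lies in every $C_k$ is exactly as you describe. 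Each approach has its merits: yours is elementary, needing only linear algebra and convexity, and is the standard textbook proof; the paper's approach is deliberately heavier because the intermediate notions ($d$-collapsibility, the Leray number) give much stronger constraints on intersection patterns than Helly's theorem alone (e.g., excluding the boundary of the octahedron in the plane) and extend to good covers, which is the point of the survey. One cosmetic remark: you apply Radon to all $n$ points at once, which is fine since the homogeneous system still has more unknowns than equations; applying it to any $d+2$ of the points would also suffice, as the containment argument only uses that every index $k$ misses at least one side of the partition.
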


A topological extension of the Helly theorem was proved few years later by Helly
himself~\cite{helly30}. His setting was for good covers. We present here a
setting for $d$-Leray complexes. Note that the theorem
stated here is trivial; however, the fact that it is meaningful relies on
Corollary~\ref{c:gcl}.

\begin{theorem}
\label{t:htopo}
The Helly number of a $d$-Leray simplicial complex is at most $d+1$. 
\end{theorem}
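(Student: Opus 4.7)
The plan is to argue by contradiction, directly unpacking the two definitions involved. Suppose $\K$ is $d$-Leray yet its Helly number exceeds $d+1$. By the definition of the Helly number given earlier in the excerpt (the number of vertices of the largest simplicial hole), this means $\K$ contains an induced $k$-dimensional simplicial hole for some $k \geq d$, that is, an induced subcomplex $\L \leq \K$ isomorphic to $\Delta^{(k)}_{k+1}$.

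The key observation is the one already recorded parenthetically after the definition of simplicial hole: the geometric realization $|\L| = |\Delta^{(k)}_{k+1}|$ is homeomorphic to the $k$-sphere $S^k$, since it is the boundary of the $(k+1)$-simplex. Hence the reduced rational homology of $\L$ agrees with that of $S^k$, giving
$$
\tilde{H}_k(\L; \Q) \;\cong\; \tilde{H}_k(S^k; \Q) \;\cong\; \Q \;\neq\; 0.
$$
Because $k \geq d$ and $\L$ is an induced subcomplex of $\K$, this directly contradicts the assumption that $\K$ is $d$-Leray, finishing the proof.

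There is essentially no obstacle; the statement is designed as a reformulation in purely homological terms, so the entire content is that a simplicial hole has nontrivial top homology, which is the standard computation $\tilde{H}_k(S^k; \Q) = \Q$. The only remark worth including is that invoking homology over $\Q$ is exactly why the $d$-Leray hypothesis was formulated with rational coefficients, so the coefficient field matches on both sides of the comparison.
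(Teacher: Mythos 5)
Your argument is correct and is exactly the one the paper has in mind: the paper calls this theorem ``trivial,'' having already noted that a $k$-dimensional simplicial hole realizes $S^k$ and hence is not $d$-Leray for $k \geq d$, which is precisely your contradiction. Nothing further is needed.
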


Theorem~\ref{t:hgeo} is a consequence of Theorem~\ref{t:htopo} if it is used
for $d$-repres\-ent\-able complexes.

\subsection{The colorful Helly theorem.}
The colorful Helly theorem regards the situation where convex sets are
colored. If there is enough color classes and every rainbow collection of the
colored sets contains a point in common, then the sets of a certain color class
contain a point in common.

\begin{theorem}[Colorful Helly, Lov\'asz~\cite{lovasz74}]
Let $\F_1, \dots, \F_{d+1}$ be families of convex sets in $\R^d$. Suppose that
for every choice $F_1 \in \F_1, \dots, F_{d+1} \in \F_{d+1}$ the intersection
$F_1 \cap \cdots \cap F_{d+1}$ is nonempty. Then there is $i \in [d+1]$ such
that the intersection of the sets in $\F_i$ is nonempty.
\end{theorem}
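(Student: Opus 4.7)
The plan is to proceed by contradiction, making the standard Helly-type reductions and then appealing to Bárány's colorful Carathéodory theorem for the crucial combinatorial step.

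First I would reduce to finite families of compact convex sets by a standard compactness argument (replace each set by its intersection with a sufficiently large ball, then use the finite intersection property to extract finite subfamilies preserving both the hypothesis and any would-be counterexample). Then, assuming for contradiction that $\bigcap \F_i = \emptyset$ for every color $i$, I would apply the ordinary Helly theorem (Theorem~\ref{t:hgeo}) to the single family $\F_i$: since its intersection is empty, some subfamily of size $d+1$ already has empty intersection. Replacing $\F_i$ by this subfamily, and noting that the colorful hypothesis is inherited when passing to subfamilies, I may assume each $\F_i = \{F_i^{(0)}, \ldots, F_i^{(d)}\}$ consists of exactly $d+1$ sets with empty common intersection while any proper subfamily meets.

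The rigidity of this configuration is what I would exploit next. For each pair $(i,k)$ with $i \in \{1,\ldots,d+1\}$ and $k \in \{0, \ldots, d\}$, pick a witness $q_i^{(k)} \in \bigcap_{j \neq k} F_i^{(j)}$; the crucial property is that $q_i^{(k)}$ belongs to every set of $\F_i$ \emph{except} $F_i^{(k)}$. These $(d+1)^2$ labelled points encode exactly how each $\F_i$ fails to have a common point. I would then invoke Bárány's colorful Carathéodory theorem on the color classes $S_i := \{q_i^{(0)}, \ldots, q_i^{(d)}\}$, taking as the shared point of the convex hulls the common point of a transversal supplied by the colorful hypothesis. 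Carathéodory produces a rainbow selection $(q_1^{(k_1)}, \ldots, q_{d+1}^{(k_{d+1})})$ whose convex hull contains that point, and a short case analysis on which original sets contain each $q_i^{(k_i)}$ should then identify a transversal $(F_1^{(k_1)}, \ldots, F_{d+1}^{(k_{d+1})})$ with empty intersection, contradicting the hypothesis.

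The main obstacle is precisely this final index-matching step: ensuring that the Carathéodory output translates into a transversal whose intersection is provably empty. The shared point for Carathéodory and the witnesses have to be chosen so that the combinatorial structure of the rainbow simplex aligns with the emptiness certificates of each color class — the $k_i$ produced by Carathéodory must be exactly the excluded indices on the $\F_i$ side. This alignment is the technical heart of Bárány's argument and is not forced by the reductions alone; I would expect to spend the bulk of the proof verifying that a generic choice of witnesses makes the matching work, or otherwise adjusting the setup (for instance, by perturbing to general position) so that the emptiness transfers cleanly.
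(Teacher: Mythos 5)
The survey does not actually prove this theorem; it is stated with a reference to Lov\'asz, so there is no in-paper argument to compare against, and your proposal must be judged on its own. Judged that way, it has a genuine gap at its central step, and you essentially concede as much. B\'ar\'any's colorful Carath\'eodory theorem requires, as input, a single point lying in $\conv S_i$ for \emph{every} color class $S_i=\{q_i^{(0)},\dots,q_i^{(d)}\}$. The point you offer --- a common point of one rainbow transversal $F_1^{(k_1)}\cap\cdots\cap F_{d+1}^{(k_{d+1})}$ --- has no reason to lie in any of these hulls: membership in the convex set $F_i^{(k_i)}$ says nothing about membership in the convex hull of the finitely many witness points $q_i^{(j)}$, which are just particular elements of some of the sets. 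Without such a common point the theorem cannot be invoked at all. And even granting a rainbow simplex containing some point, the conclusion (that the point lies in $\conv\{q_1^{(k_1)},\dots,q_{d+1}^{(k_{d+1})}\}$) does not translate into a transversal of the original sets with empty intersection; the ``index-matching'' you defer to a generic choice of witnesses is precisely the statement that needs proving, and no amount of general position makes the separate facts $q_i^{(k_i)}\notin F_i^{(k_i)}$ combine across colors into an emptiness certificate. (A smaller issue: a minimal subfamily of $\F_i$ with empty intersection need only have at most $d+1$ members, not exactly $d+1$, so the $(d+1)^2$ witness points need not all exist.)

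The missing idea is an extremal argument rather than colorful Carath\'eodory. Lov\'asz's proof (reproduced, e.g., in B\'ar\'any's 1982 paper and in Matou\v{s}ek's \emph{Lectures on Discrete Geometry}) runs as follows: reduce to compact sets as you do; for each rainbow transversal $T$ let $y(T)$ be the lexicographically smallest point of $\bigcap T$; choose $T_0$ maximizing $y(T)$ lexicographically and set $y_0=y(T_0)$. A Helly-type lemma shows that $y_0$ is already the lexicographic minimum of the intersection of at most $d$ of the $d+1$ sets in $T_0$, so some color $i$ is ``not needed''; replacing the $i$-colored member of $T_0$ by an arbitrary $F'\in\F_i$ can only increase the lexicographic minimum, and maximality forces it to stay equal to $y_0$, whence $y_0\in F'$ for every $F'\in\F_i$. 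Your preliminary reductions (compactness, passing to subfamilies) are sound, but to complete the proof you would need to replace the colorful Carath\'eodory step with an argument of this extremal flavor.
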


The Helly theorem is the consequence of the colorful Helly theorem if we set
$\F_1 = \F_2 = \cdots = \F_{d+1}$.

The reformulation via simplicial complexes is the following.

Let $V$ be a finite set partitioned into disjoint color classes $V_1, \dots,
V_k$. A subset $W \subseteq V$ is \emph{rainbow} if $|V_i \cap W| \leq 1$ for
$i \in [k]$.

\begin{theorem}
\label{t:crepre}
Let $\K$ be a $d$-representable simplicial complex with vertices partitioned
into $d+1$ color classes. Assume that every rainbow subset of vertices is a
simplex of $\K$. Then there is a color class 
such that its vertices form a simplex of $\K$.  
\end{theorem}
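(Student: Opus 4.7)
The plan is to show that Theorem~\ref{t:crepre} is essentially just the colorful Helly theorem rephrased in the simplicial complex language, so the strategy is to unpack a $d$-representation of $\K$ and feed it directly into the colorful Helly theorem.

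First I would invoke the hypothesis that $\K$ is $d$-representable to fix a $d$-representation: a collection of convex sets $\{C_v : v \in V\}$ in $\R^d$ whose nerve is isomorphic to $\K$. For each color class $V_i$ define the family $\F_i := \{C_v : v \in V_i\}$, so that we have $d+1$ families of convex sets in $\R^d$, one per color.

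Next I would verify the hypothesis of the colorful Helly theorem for the families $\F_1, \dots, \F_{d+1}$. Pick an arbitrary transversal $C_{v_1} \in \F_1, \dots, C_{v_{d+1}} \in \F_{d+1}$. The set $W = \{v_1, \dots, v_{d+1}\}$ is rainbow by construction (at most one vertex per color class), and so by assumption it is a simplex of $\K$. Because $\K$ is the nerve of the representation, this means $C_{v_1} \cap \cdots \cap C_{v_{d+1}} \neq \emptyset$. This is exactly the hypothesis of the colorful Helly theorem.

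Applying the colorful Helly theorem yields an index $i \in [d+1]$ such that $\bigcap_{v \in V_i} C_v \neq \emptyset$. Translating back through the nerve, this intersection being nonempty means that $V_i$ itself is a face of $\K$, which is the desired conclusion. There is no real obstacle here: the entire content of the theorem is packed into Lovász's colorful Helly theorem, and the only task is the bookkeeping of translating ``rainbow transversal has nonempty intersection of convex sets'' to ``rainbow set is a simplex of the nerve'' and back. (One minor cosmetic point: if some color class $V_i$ happens to be empty, the statement holds vacuously since $\emptyset$ is always a face of $\K$; otherwise the argument above applies without change.)
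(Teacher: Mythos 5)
Your proposal is correct and matches the paper's intent exactly: the paper presents Theorem~\ref{t:crepre} as the direct reformulation of Lov\'asz's colorful Helly theorem via the nerve of a $d$-representation, which is precisely the translation you carry out. The bookkeeping (rainbow transversal $\leftrightarrow$ nonempty intersection of one convex set per color) and the remark about empty color classes are both fine.
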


Let $\M'$ be a simplicial complex with the vertex set $V$ from above whose
faces are the rainbow subsets of $V$. It is not hard to see that $\M'$ is a
matroidal complex of rank $k$. 

Kalai and Meshulam~\cite{kalai-meshulam05} obtained the following matroidal
extension of the colorful Helly theorem.

\begin{theorem}
\label{t:cmatro}
Let $\K$ be a $d$-collapsible simplicial complex on $V$ and let $\M$ be a
matroidal complex on $V$ with rank function $\rho$ such that $\M \subseteq \K$. Then there is a simplex
$\alpha \in \K$ such that $\rho(\alpha) = \rho(\M)$ and $\rho(V \setminus
\alpha) \leq d$.
\end{theorem}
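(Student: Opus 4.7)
My plan is to induct on the length $N$ of a $d$-collapsing sequence $\K = \K_0 \to \K_1 \to \cdots \to \K_N = \emptyset$, with the trivial base handled by taking $\alpha$ to be the unique maximal face when $\K$ is a simplex. For the inductive step I would look at the first elementary $d$-collapse $\K \to \K_1$ with collapsing pair $(\sigma, \tau)$, so $|\sigma| \leq d$ and $\tau$ is the unique inclusion-maximal face of $\K$ containing $\sigma$, and then split the analysis according to whether this collapse destroys any face of $\M$ or not.

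If $\M \subseteq \K_1$, the problem reduces to the shorter instance $(\K_1, \M)$ and the inductive hypothesis yields the desired $\alpha \in \K_1 \subseteq \K$. Otherwise some $\eta \in \M$ satisfies $\sigma \subseteq \eta \subseteq \tau$, and I would set $\alpha := \tau$. Since $\eta$ is $\M$-independent, so is its subset $\sigma$; extending $\sigma$ to a basis $B$ of $\M$ then produces $B \in \M \subseteq \K$ with $B \supseteq \sigma$, and the uniqueness of $\tau$ forces $B \subseteq \tau$. Hence $\rho(\tau) \geq |B| = \rho(\M)$, which gives the first matroid condition. For the second, pick any $w \in V \setminus \tau$: the set $\sigma \cup \{w\}$ is not contained in $\tau$, hence not in $\K$, hence not in $\M$; so $\sigma \cup \{w\}$ is matroid-dependent and $w \in \operatorname{cl}_\M(\sigma)$. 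This gives $V \setminus \tau \subseteq \operatorname{cl}_\M(\sigma)$, from which $\rho(V \setminus \tau) \leq \rho(\sigma) \leq |\sigma| \leq d$ follows immediately.

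I do not anticipate a serious obstacle, since the whole argument hinges on a single observation in the ``destructive'' case: $\sigma$ inherits independence from $\eta$, and the uniqueness clause in the definition of an elementary $d$-collapse pins every face of $\K$ extending $\sigma$ inside $\tau$. From this, both matroid conditions on $\alpha = \tau$ fall out essentially for free: bases of $\M$ extending $\sigma$ cannot escape $\tau$ (giving full rank), while elements outside $\tau$ cannot be joined to $\sigma$ without creating a dependence (giving low rank of the complement). The only delicate bookkeeping concerns the base case of the induction and the degenerate possibility $\sigma = \emptyset$, neither of which should cause real trouble.
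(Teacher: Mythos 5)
The survey states Theorem~\ref{t:cmatro} without proof, citing Kalai and Meshulam, so there is no in-paper argument to compare against; your proof is correct and is essentially the standard one: induct on the length of the collapsing sequence, and at the first elementary collapse $(\sigma,\tau)$ that destroys a face of $\M$, take $\alpha=\tau$, using that every face of $\K$ containing $\sigma$ lies in $\tau$ to trap a basis extending $\sigma$ inside $\tau$ and to force $V\setminus\tau\subseteq\operatorname{cl}_{\M}(\sigma)$. One cosmetic remark: the true base case of your induction is the void complex, where $\M\subseteq\K$ fails for any genuine matroidal complex (which contains $\emptyset$), so the ``destructive'' case is guaranteed to occur before the sequence terminates and the base case is vacuous rather than ``$\K$ is a simplex''.
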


Theorem~\ref{t:cmatro} indeed generalizes Theorem~\ref{t:crepre}. If we set $\M =
\M'$, then $\alpha$ contains all vertices of a certain color class (since
$\rho(V \setminus \alpha) \leq d$) and moreover $\alpha$ even contains a vertex
of every color class (since $\rho(\alpha) = \rho(\M))$.  

More importantly, Kalai and Meshulam~\cite{kalai-meshulam05} obtained a
topological generalization (with a weaker conclusion, but still more general
then that of Theorem~\ref{t:crepre}).

\begin{theorem}
\label{t:ctopo}
Let $\K$ be a $d$-Leray simplicial complex on $V$ and let $\M$ be a
matroidal complex on $V$ with rank function $\rho$ such that $\M \subseteq \K$. Then there is a simplex
$\alpha \in \K$ such that  $\rho(V \setminus \alpha) \leq d$.
\end{theorem}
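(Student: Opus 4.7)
I would argue by contradiction: suppose $\rho(V\setminus\alpha)\ge d+1$ for every face $\alpha\in\K$. Two preliminary reductions simplify the setup. Replacing $\M$ by its matroid truncation at rank $d+1$—whose independence complex is the $d$-skeleton $\M^{(d)}\subseteq\M\subseteq\K$—leaves both the hypothesis and the conclusion intact, so I may assume $\rho(V)=d+1$. Next, if $v$ is a coloop of $\M$ then $\rho(V\setminus\{v\})=d$ and $\{v\}\in\M\subseteq\K$ immediately furnish $\alpha=\{v\}$ as a witness; so I may also assume that $\M$ is coloop-free. Under these assumptions, a classical shellability result (due to Bj\"orner) guarantees that $\M$ is homotopy equivalent to a nonempty wedge of $d$-spheres, and in particular $\tilde H_d(\M)\neq 0$.

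The contradiction hypothesis now reads $\K=\bigcup_{B\in\mathcal B}\K[V\setminus B]$, where $\mathcal B$ is the collection of bases of $\M$. The plan is to feed this cover into a Mayer--Vietoris-type spectral sequence. Every iterated intersection $\K[V\setminus(B_{i_1}\cup\cdots\cup B_{i_k})]$ is itself an induced subcomplex of $\K$, hence $d$-Leray, and therefore has no reduced homology in dimensions $\ge d$. This collapses the $E_1$-page strictly above row $d-1$, so any homology in total degree $d$ has to originate in the ``nerve direction''. A direct Alexander-duality computation then identifies the relevant piece of the nerve of this cover with the top reduced homology of $\M$; since $\tilde H_d(\M)\neq 0$, such a class survives to $E_\infty$ and produces an induced subcomplex $\K[W]$ with $\tilde H_d(\K[W])\neq 0$, contradicting the $d$-Leray hypothesis on $\K$.

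The main obstacle is the spectral-sequence bookkeeping: verifying that the surviving top-dimensional class cannot be cancelled by higher differentials and that the Alexander-dual identification of the nerve is sharp. A technically simpler alternative, and the route actually taken in~\cite{kalai-meshulam05}, is an induction on $|V|$ based on the link characterization of the Leray number (Proposition~3.1 of~\cite{kalai-meshulam06}): pick a non-loop non-coloop vertex $v$, apply the inductive hypothesis in parallel to the deletion pair $(\K[V\setminus\{v\}],\,\M\setminus v)$ and to the contraction/link pair $(\lk_\K(v),\,\M/v)$, and reconcile the two candidate faces via a short case analysis on whether $v$ can be adjoined.
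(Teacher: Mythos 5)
The survey states this theorem without proof, citing Kalai and Meshulam~\cite{kalai-meshulam05}, so there is no in-paper argument to compare against; I judge your sketch on its own merits. Your preliminary reductions are sound: truncating $\M$ at rank $d+1$ preserves both hypothesis and conclusion, a coloop immediately yields the witness $\alpha=\{v\}$, and Bj\"orner's shellability theorem then gives $\tilde H_d(\M)\neq 0$ for the coloop-free rank-$(d+1)$ case. The translation of the contradiction hypothesis into the cover $\K=\bigcup_{B\in\mathcal B}\K[V\setminus B]$ is also correct, as is the observation that all iterated intersections are induced subcomplexes and hence $d$-Leray.

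The homological core, however, has a genuine gap. The Mayer--Vietoris spectral sequence of this cover converges to $H_*(\K)$, which already vanishes in reduced degrees $\geq d$ because $\K$ itself is $d$-Leray; with the $E_1$-page also concentrated in rows $\leq d-1$, the spectral sequence is perfectly consistent with everything being zero, so no contradiction can be extracted from it alone. More fundamentally, your argument never actually uses the hypothesis $\M\subseteq\K$: you invoke $\tilde H_d(\M)\neq 0$ only as a fact about an auxiliary complex, while the nerve of your cover lives on the set of bases $\mathcal B$, and the asserted ``Alexander-duality'' identification of its degree-$d$ homology with $\tilde H_d(\M)$ is unsubstantiated. (The survival claim is also unjustified: a class in $E_2^{d,0}$ can be killed by the outgoing differentials $d_r\colon E_r^{d,0}\to E_r^{d-r,r-1}$, whose targets are potentially nonzero for $2\le r\le d$; and even if it survived, that would show $\tilde H_d(\K)\neq 0$, not produce a proper induced subcomplex $\K[W]$.) What is needed is to feed the fundamental $d$-cycle of $\M$ --- a cycle in $\K$ precisely because $\M\subseteq\K$ --- into the argument and show that the cover prevents it from bounding; this ``homology propagation'' step carries the real content of the Kalai--Meshulam proof and is absent here. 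Your fallback route does not close either: applying induction to the deletion pair yields $\alpha\in\K$ with only $\rho(V\setminus\alpha)\le d+1$, and the contraction/link pair yields $\beta$ with $\rho(V\setminus(\beta\cup\{v\}))\le d+1$ as well; both branches are off by one, and reconciling them is exactly the nontrivial part rather than a short case analysis.
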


\subsection{The fractional Helly theorem.} Let $\C$ be again a collection of
convex sets in $\R^d$ (containing at least $d+1$ sets). The Helly theorem
assumes that if \emph{every} $d+1$-tuple has a nonempty intersection then
\emph{all} the sets have point in common. The fractional Helly theorem is
designed for a situation when \emph{many} $d+1$-tuples have a nonempty
intersection concluding that \emph{many} sets of the collection have a point 
in common.

\begin{theorem}[Fractional Helly, Katchalski and Liu~\cite{katchalski-liu79}]
For every $a \in (0,1]$ and $d \in \N$ there is $b = b(d, a) \in (0,1]$ with
the following property. Let $\C$ be a collection of $n$ convex sets
in $\R^d$ $(n \geq d+1)$. Assume that the number of $(d+1)$-tuples with a
nonempty intersection is at least $a \binom{n}{d+1}$. Then there is a point common to at least $bn$ sets in $\C$.
\end{theorem}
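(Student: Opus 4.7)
The plan is to pass to the nerve $\K = \N(\C)$ and exploit the fact, established in the previous sections, that $\K$ is $d$-representable and therefore, by Wegner's theorem, also $d$-collapsible and $d$-Leray. Under the nerve dictionary the $(d+1)$-tuples with nonempty intersection are exactly the $d$-dimensional faces of $\K$, so the hypothesis becomes $f_d(\K) \geq a\binom{n}{d+1}$, where $f_d$ counts $d$-dimensional faces. Likewise, a point common to $m$ of the sets corresponds to a face of $\K$ of size $\geq m$, so it suffices to produce a face of $\K$ of size at least $b(d,a)\,n$.

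The heart of the argument is a face-counting inequality for $d$-Leray complexes: if $\K$ is $d$-Leray on $n$ vertices and its largest face has size at most $k$, then
$$
f_d(\K) \;\leq\; \binom{n}{d+1} \,-\, \binom{n-k}{d+1}.
$$
Granting this, the hypothesis $f_d(\K) \geq a\binom{n}{d+1}$ forces $\binom{n-k}{d+1} \leq (1-a)\binom{n}{d+1}$, hence $k \geq \bigl(1-(1-a)^{1/(d+1)}\bigr)n - O_d(1)$, and one may take $b(d,a)$ slightly smaller than $1-(1-a)^{1/(d+1)}$.

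The main obstacle is the face-counting inequality itself. My preferred route is via Alexander duality and Stanley's Upper Bound Theorem, in the style of Kalai: the absence of large faces in $\K$ translates to a lower bound on the minimum nonface size of the Alexander dual $\K^*$, while $d$-Lerayness translates to a bound on the homological depth of its Stanley--Reisner ring. Applying the Upper Bound Theorem to an appropriate Cohen--Macaulay quotient then yields the desired bound on $f_d(\K)$. A more elementary combinatorial alternative is to induct on $n$ using $d$-collapsibility: pick an elementary $d$-collapse $\sigma \subseteq \tau$, split the $d$-faces of $\K$ according to whether they contain $\sigma$, and combine the inductive estimate on the collapsed complex with the local contribution from the link of $\sigma$ in $\tau$. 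Either way, the subtle point is to extract the sharp dependence on $a$ rather than merely a qualitative bound; everything else in the reduction (nerve translation, Wegner's inclusion, elementary binomial manipulation) is routine.
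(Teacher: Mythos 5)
The paper states this theorem without proof---it is a survey item with a citation to Katchalski and Liu---so there is no internal argument to compare against; judged on its own terms, your proposal has a genuine gap. The reduction to the nerve and the closing binomial manipulation are fine, but you have pushed the entire mathematical content into the face-counting inequality $f_d(\K) \leq \binom{n}{d+1} - \binom{n-k}{d+1}$ for $d$-Leray complexes with no face on more than $k$ vertices, and that inequality is asserted rather than proved. It is true and it is deep: it is precisely the Upper Bound Theorem of Kalai for $d$-collapsible complexes, extended to $d$-Leray complexes by Alon, Kalai, Matou\v{s}ek and Meshulam, and it is what produces the sharp constant $1-(1-a)^{1/(d+1)}$ that the survey mentions. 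Neither of your sketches discharges it. The Alexander duality plus Stanley UBT route is not a known shortcut; the actual arguments go through exterior algebraic shifting (equivalently, regularity bounds for the Stanley--Reisner ideal), and you give no indication of how ``homological depth'' and the classical upper bound theorem for polytopes would be combined. The collapsibility route ``inducting on $n$'' also fails as described, because an elementary $d$-collapse removes an interval of faces but in general removes no vertices, so the vertex count is the wrong induction parameter; the known argument inducts along the collapsing sequence and has to control the whole $f$-vector, not just $f_d$ and the maximum face size.

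You are also overshooting the target. The statement only asks for \emph{some} $b(d,a)>0$, and for that there is a short, self-contained argument that avoids all of this machinery: for each intersecting $(d+1)$-tuple take the lexicographically least point of its intersection, observe that this point is already the lexicographic minimum of the intersection of some $d$ of those sets, and conclude by pigeonhole that some $d$-tuple is charged by at least $a\binom{n}{d+1}/\binom{n}{d} = a(n-d)/(d+1)$ good $(d+1)$-tuples, whence its lexicographic minimum lies in at least $d + a(n-d)/(d+1)$ of the sets; this gives $b = a/(d+1)$. If you only want the theorem as stated, do that. If you want the sharp $b$, you must actually prove the Upper Bound Theorem, which is a substantial result in its own right and cannot be waved at.
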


The largest possible value for $b(d,a)$ is $1 - (1-a)^{1/(d+1)}$ due to
Kalai~\cite{kalai84b} and Eckhoff~\cite{eckhoff85} (i.e., it is even known that
$b$ cannot be larger). We remark that the Helly theorem is a special case when
setting $a = 1$.

There is also a topological extension of the fractional Helly theorem by Alon,
Kalai, Matou\v{s}ek and Meshulam~\cite{alon-kalai-matousek-meshulam02} (with
the same bound for $b$). They actually prove a bit stronger result (in order to
obtain topological $(p,q)$-theorem); however, we prefer to avoid the technical
details and so we present the result only in this simpler form.

\begin{theorem}
For every $a \in (0,1]$ and $d \in \N$ there is $b = b(d, a) \in (0,1]$ with
the following property. Let $\K$ be a $d$-Leray complex with $n$ vertices $(n
\geq d+1)$. Assume that there are at least $a \binom{n}{d+1}$
$d$-faces in $\K$. Then there is a face of size at least $bn - 1$ in
$\K$.
\end{theorem}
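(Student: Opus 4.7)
My plan is to reduce the statement to the topological colorful Helly theorem (Theorem~\ref{t:ctopo}). The strategy is to use the abundance of $d$-faces of $\K$ to locate a complete $(d+1)$-partite sub-structure in $\K$ with linear-size parts, and then apply colorful Helly to extract a large simplex from it.

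First, I would view the $d$-faces of $\K$ as the edges of a $(d+1)$-uniform hypergraph $H$ of density at least $a$ on the vertex set $V$. The preparatory step is to find pairwise disjoint vertex sets $V_1,\dots,V_{d+1} \subseteq V$, each of size at least $\beta(a,d)\,n$ for some constant $\beta(a,d) > 0$, with the property that every transversal $\{v_1,\dots,v_{d+1}\}$ (with $v_i \in V_i$) is a $d$-face of $\K$. My initial attempt would be a probabilistic/iterative cleaning argument: pick a random balanced $(d+1)$-coloring of $V$, count rainbow $d$-faces in expectation to obtain a dense colorful sub-hypergraph, and then repeatedly discard vertices whose rainbow-degree is too small, until every rainbow $(d+1)$-tuple is a $d$-face.

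Once $V_1,\dots,V_{d+1}$ are obtained, set $W := V_1 \cup \cdots \cup V_{d+1}$ and consider the induced subcomplex $\K^* := \K[W]$, which is again $d$-Leray (induced subcomplexes of induced subcomplexes are induced, so $d$-Lerayness is preserved directly from the definition). Let $\M$ be the matroidal complex on $W$ corresponding to the partition matroid with color classes $V_1,\dots,V_{d+1}$: its independent sets are the rainbow subsets of $W$, its rank function $\rho$ counts the number of intersected color classes, and $\rho(\M) = d+1$. By construction, every rainbow subset of $W$ has at most $d+1$ elements and is a face of $\K^*$ (using downward closure of $\K$), so $\M \subseteq \K^*$. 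Theorem~\ref{t:ctopo} then supplies a face $\alpha \in \K^*$ with $\rho(W \setminus \alpha) \leq d$; this forces at least one color class $V_i$ to lie entirely in $\alpha$, yielding $|\alpha| \geq \beta(a,d)\,n$. Since every face of $\K^*$ is a face of $\K$, setting $b := \beta(a,d)$ completes the proof.

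The main obstacle is the preparatory step: standard hypergraph supersaturation results (in the spirit of Erd\H{o}s) only deliver disjoint color classes of poly-logarithmic size, not linear size, so the naive cleaning argument falls short. Achieving linear-size parts seems to genuinely require leveraging the $d$-Leray hypothesis --- for instance via a Kalai--Meshulam type bound on the Betti numbers of induced subcomplexes that limits how many ``bad'' vertices the cleaning procedure can possibly remove, or via a shifting/inductive argument in the spirit of Kalai's proof of the Eckhoff upper bound in the convex case. This topological--combinatorial interplay, making the $d$-Leray property pay off quantitatively at the level of linear-sized blocks, is the technical heart of the theorem.
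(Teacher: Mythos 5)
The paper itself gives no proof of this theorem; it only cites Alon, Kalai, Matou\v{s}ek and Meshulam, whose argument runs through constraints on $f$-vectors of $d$-Leray complexes obtained via exterior algebraic shifting (the Leray analogue of the Kalai--Eckhoff proof of the optimal fractional Helly bound for convex sets). Your final step is sound as far as it goes: if you had disjoint classes $V_1,\dots,V_{d+1}$, each of size at least $\beta n$, all of whose transversals are $d$-faces of $\K$, then the partition matroid $\M$ together with Theorem~\ref{t:ctopo} (which gives $\rho(W\setminus\alpha)\le d$, forcing some class entirely into $\alpha$; you do not even need the stronger conclusion $\rho(\alpha)=\rho(\M)$) would produce a face of size at least $\beta n$.

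The genuine gap is the preparatory step, and it is not a technical obstacle that can be patched around: it is \emph{equivalent} to the theorem you are trying to prove, up to constants. Indeed, if $\K$ has a face $\alpha$ with $|\alpha|\ge bn$, then splitting $\alpha$ into $d+1$ parts of size roughly $bn/(d+1)$ yields exactly such a complete $(d+1)$-partite system, since every transversal is a subset of $\alpha$ and hence a $d$-face; conversely, your colorful-Helly step converts such a system back into a large face. So ``find linear-size classes with all transversals present'' and ``find a face of linear size'' are the same assertion, and the proposal reduces the theorem to itself. As you correctly note, density alone (K\H{o}v\'ari--S\'os--Tur\'an/Erd\H{o}s box-type supersaturation) only yields parts of size about $(\log n)^{1/d}$, and no concrete mechanism is offered for how the $d$-Leray hypothesis upgrades this to linear size --- but that upgrade is precisely the entire content of the fractional Helly theorem. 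To close the gap one needs the actual machinery: show (as AKMM do) that a $d$-Leray complex with no face of size $r$ can have only a bounded fraction of all possible $d$-faces, which is extracted from the structure of shifted $d$-Leray complexes after algebraic shifting, not from the colorful Helly theorem.
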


The largest possible value for $b(d,a)$ is again $1 - (1-a)^{1/(d+1)}$.

\subsection{The $(p,q)$ theorem.} Let $p$, $q$ be integers such that $p \geq q
\geq d+1$. A family $\F$ of convex sets in $\R^{d}$ has the \emph{$(p,q)$
property} if among every $p$ sets of $\F$ some $q$ have a nonempty
intersection. The \emph{pinning number}, $\pi(\F)$, of a 
family $\F$ is the smallest number of points in $\R^d$ that intersect all
members of $\F$. 

\begin{theorem}[$(p,q)$-theorem, Alon and Kleitman~\cite{alon-kleitman92}]
For every $p \geq q \geq d+1$ there is a number $C = C(p,q,d)$ such that 
$\pi(\F) \leq C$ for every family of convex sets in $\R^d$ with $(p,q)$
property.
\end{theorem}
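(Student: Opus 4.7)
The plan is to follow Alon and Kleitman's original approach, which reduces the combinatorial $(p,q)$ condition to a piercing bound in three steps: a double counting step that feeds the fractional Helly theorem, an LP-duality step that produces a fractional transversal, and a weak $\epsilon$-net step that turns the fractional transversal into a genuine piercing set.

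For the first step, set $n := |\F|$ and assume $n \geq p$. Each $p$-subfamily of $\F$ contains, by the $(p,q)$ property, a $q$-subfamily with nonempty intersection, and this $q$-subfamily in turn contains $\binom{q}{d+1}$ intersecting $(d+1)$-subfamilies. A double counting over all $p$-subfamilies (using the identity $\binom{n}{p}\binom{p}{d+1} = \binom{n}{d+1}\binom{n-d-1}{p-d-1}$) shows that at least an $\alpha := \binom{q}{d+1}/\binom{p}{d+1}$ fraction of all $(d+1)$-tuples in $\F$ have nonempty intersection. The fractional Helly theorem (already stated in the survey) then produces a point of $\R^d$ lying in at least $\beta n$ members of $\F$, for some $\beta = \beta(p,q,d) > 0$.

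For the second step I would upgrade this ``one-point'' conclusion to a fractional transversal: a finitely supported probability measure $\mu$ on $\R^d$ with $\mu(C) \geq \delta$ for every $C \in \F$, where $\delta = \delta(p,q,d) > 0$. By the minimax theorem applied to the matrix game whose rows are points of $\R^d$ and whose columns are members of $\F$ (with entry $1$ if the point lies in the set), such a $\mu$ exists unless there is a probability weighting $w$ on $\F$ for which every point of $\R^d$ has $w$-weight strictly less than $\delta$. Rounding $w$ to a rational multiset representation of $\F$ (and observing that the $(p,q)$ property survives, since duplicated sets trivially contribute intersecting $q$-subtuples) brings us back into the setting of Step 1, producing a point of $w$-weight at least $\beta$; choosing $\delta < \beta$ yields a contradiction.

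Finally I would apply the weak $\epsilon$-net theorem for convex sets of Alon, B\'ar\'any, F\"uredi, and Kleitman to $\mu$ with $\epsilon := \delta$: there is a set $N \subset \R^d$ of size at most $f(d,\delta)$ meeting every convex set of $\mu$-measure at least $\delta$. In particular $N$ pierces every $C \in \F$, so $\pi(\F) \leq f(d,\delta)$, which gives the required constant $C(p,q,d)$. The main obstacle is the weak $\epsilon$-net theorem itself, whose proof relies on the first selection lemma (a centerpoint-type statement of B\'ar\'any) and is the genuinely nontrivial ingredient; the counting reduction to fractional Helly is elementary, and the passage from ``heavy point'' to ``fractional transversal'' is a standard but technically delicate application of LP duality.
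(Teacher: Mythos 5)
The survey does not prove this theorem: it is quoted from Alon and Kleitman with a citation only, so there is no in-paper argument to compare against, and your proposal has to be judged on its own. Its architecture is the correct (and standard) one: the double count with $\alpha=\binom{q}{d+1}/\binom{p}{d+1}$ via the identity $\binom{n}{p}\binom{p}{d+1}=\binom{n}{d+1}\binom{n-d-1}{p-d-1}$ does feed the fractional Helly theorem, LP duality does reduce the problem to bounding a fractional transversal, and the weak $\epsilon$-net theorem of Alon, B\'ar\'any, F\"uredi and Kleitman does convert a fractional transversal into a piercing set whose size depends only on $d$ and $\delta$, which is exactly the constant $C(p,q,d)$. (The case $n<p$, where the $(p,q)$ property is vacuous, is harmless since then $\pi(\F)\leq n\leq p-1$.)

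The one step that would fail as written is the parenthetical in your Step 2 asserting that the $(p,q)$ property ``trivially survives'' passage to the integer multiset. It does not: two copies of the same set $C$ form an intersecting \emph{pair}, not an intersecting $q$-tuple, and a $p$-element sub-multiset may contain fewer than $p$ (even fewer than $q$) distinct sets, so the original $(p,q)$ property cannot be invoked for it. For instance, with $(p,q)=(4,3)$ the sub-multiset $\{C_1,C_1,C_2,C_2\}$ need not contain three members with a common point. The standard repair is not to transfer the $(p,q)$ property at all, but to verify the \emph{hypothesis of the fractional Helly theorem} directly for the multiset: one shows by a weighted count (after first disposing of the easy case in which some single set carries a constant fraction of the total multiplicity, where any of its points already has large weight) that a positive fraction of the $(d+1)$-element sub-multisets are intersecting, and one observes that the fractional Helly theorem itself is insensitive to repetitions since its proof applies to arbitrary indexed families. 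With that substitution, plus the cosmetic remark that the ``matrix game'' over points of $\R^d$ must be restricted to a finite set of candidate points (one representative per nonempty intersection pattern) to make the minimax theorem applicable, your outline becomes a complete proof.
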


The $(p,q)$ theorem was originally conjectured by Hadwiger and Debrunner.

In order not to introduce a new symbol, we let $C$ denote the smallest constant
for which is the assertion of the $(p,q)$ theorem valid. The Helly theorem
simply says that $C(d+1,d+1,d) = 1$. In general, there are, however, big gaps
between lower and upper bounds for $C$. In the first nontrivial case it is
relatively easy to come up with an example showing $C(4,3,2) \geq 3$; see
Figure~\ref{f:43prop}. Kleitman, Gy\'{a}rf\'{a}s and
T\'{o}th~\cite{kleitman-gyarfas-toth01} proved that $C(4,3,2)
\leq 13$. The author of this survey believes that the actual value of
$C(4,3,2)$ is much closer to $3$ then $13$; however, it seems difficult to
obtain the precise value. It is widely open what is the value of $C(p,q,d)$ for
larger $p$, $q$ and $d$.

\begin{figure}
\begin{center}
\includegraphics{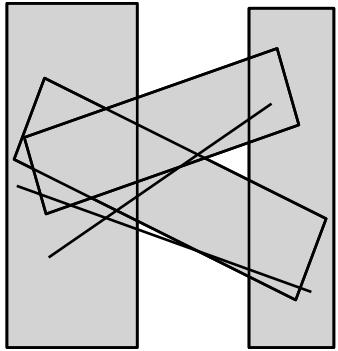}
\caption{Six convex sets with (4,3) property and pinning number 3.}
\label{f:43prop}
\end{center}
\end{figure}

Now we reformulate the setting for simplicial complexes. A simplicial complex
$\K$ has \emph{$(p,q)$ property} for $p \geq q$ if among every $p$ vertices of
$\K$ there is $q$ of them forming a face (of dimension $q - 1$). The
\emph{pinning number}, $\pi(K)$, of $\K$ is the smallest number of faces of
$\K$ such that every vertex of $\K$ is in at least one of these faces. Then the
statement of the $(p,q)$ theorem remains valid even for $d$-Leray complexes and
consequently for good covers due to Alon et
al.~\cite{alon-kalai-matousek-meshulam02}.\footnote{Precisely speaking, Alon et
al. state the theorem for good covers only; however, the same reasoning can be
used for $d$-Leray complexes.}

\begin{theorem}
For every $p \geq q \geq d+1$ there is a number $C' = C'(p,q,d)$ such that 
$\pi(\K) \leq C'$ for every $d$-Leray complex $\K$ with $(p,q)$ property.
\end{theorem}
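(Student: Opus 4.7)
The plan is to adapt the Alon--Kleitman proof of the geometric $(p,q)$-theorem to the topological setting, replacing each of its ingredients by a $d$-Leray analogue. The main ingredients will be the fractional Helly theorem already stated above, linear programming duality, and a rounding step to pass from a fractional to an integer piercing.

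First I would use the $(p,q)$ property to produce many $d$-dimensional faces. Every $p$-subset of vertices contains a $q$-subset forming a face of $\K$, and every such $q$-face contains $\binom{q}{d+1}$ faces of dimension $d$. A routine double count gives at least $a\binom{n}{d+1}$ faces of dimension $d$ in $\K$, for some constant $a=a(p,q)>0$. Applying the topological fractional Helly theorem stated above produces a face of $\K$ of size at least $bn$ for some $b=b(p,q,d)>0$. Since an induced subcomplex of a $d$-Leray complex is again $d$-Leray (directly from the definition) and inherits the $(p,q)$ property as long as it still has at least $p$ vertices, the same conclusion applies inside any sufficiently large induced subcomplex $\K[W]$.

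Next I would phrase piercing as a linear program: minimize $\sum_\alpha x_\alpha$ over $x_\alpha\ge 0$ subject to $\sum_{\alpha\ni v} x_\alpha \geq 1$ for every vertex $v$, where $\alpha$ ranges over faces of $\K$. The dual assigns nonnegative weights $y_v$ to the vertices subject to $\sum_{v\in\alpha} y_v \leq 1$ for every face and maximizes $\sum_v y_v$. The conclusion of the previous paragraph, applied to rational weights (equivalently, to the complex obtained by duplicating vertices) and passed to the limit, shows that for every such weighting there is a face whose weight is at least a $b$-fraction of the total. Hence the fractional piercing number $\tau^{*}(\K)$ is bounded by $1/b$, a constant depending only on $p$, $q$ and $d$.

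The main obstacle is the final step: passing from a constant bound on $\tau^{*}(\K)$ to a constant bound on $\pi(\K)$, without reintroducing a dependence on $n$. In the convex setting this is handled by the weak $\epsilon$-net theorem, whose bound depends only on $d$ and $\epsilon$; no such tool is directly available for abstract $d$-Leray complexes, so its analogue has to be supplied by a separate argument. The approach I would follow is the one of Alon, Kalai, Matou\v{s}ek and Meshulam: iterate a colorful variant of the fractional Helly step, combined with the LP-duality bound on $\tau^{*}$, to peel off a constant fraction of the vertices with a bounded number of faces at each stage, and recurse on the induced subcomplex of the remaining vertices (which is again $d$-Leray with the $(p,q)$ property). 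Making this iteration quantitative and uniform in $n$ is the technical heart of the argument and is where the Leray hypothesis, rather than merely the existence of a single large face, is essential. Combining the pieces yields $\pi(\K)\le C'(p,q,d)$ as claimed.
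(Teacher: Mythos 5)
The survey does not actually prove this theorem: it states it and cites Alon, Kalai, Matou\v{s}ek and Meshulam, remarking only that their argument for good covers goes through for $d$-Leray complexes. So your proposal has to be measured against the proof in the cited source, and up to a point it follows that proof faithfully: the double count converting the $(p,q)$ property into a positive fraction of $d$-faces, the fractional Helly step, and the LP-duality bound $\tau^{*}(\K)\leq 1/b$ are all the right first moves. (Even here one detail is glossed over: to apply fractional Helly to a rationally weighted instance you must check that cloning a vertex preserves the $d$-Leray property, or at least the fractional Helly constant; this is true but is not literally ``equivalent'' to using rational weights.)

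The genuine gap is the final step, and the repair you sketch would not work. ``Peel off a constant fraction of the vertices with a bounded number of faces at each stage and recurse on the induced subcomplex of the remaining vertices'' is exactly the greedy iteration of the fractional Helly theorem: after $k$ rounds roughly $(1-b)^{k}n$ vertices remain uncovered, so covering all of them costs on the order of $\log n/\log\frac{1}{1-b}$ faces. No amount of making this iteration ``quantitative'' removes the dependence on $n$; this is precisely the obstruction that forced Alon and Kleitman to introduce weak $\epsilon$-nets, and it is why your bound on $\tau^{*}$ alone does not finish the proof. What Alon, Kalai, Matou\v{s}ek and Meshulam actually do is prove, as a separate and substantial theorem, that a bounded fractional Helly number for a set system by itself implies a weak $\epsilon$-net theorem for that system (a set of at most $f(\epsilon)$ faces meeting every vertex that is $\epsilon$-deep with respect to any given weighting); combining this with $\tau^{*}\leq 1/b$ then yields the constant bound on $\pi(\K)$. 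That implication, not a colorful fractional Helly iteration, is the missing heart of the argument, and your proposal neither supplies it nor correctly describes it.
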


\subsection{The Amenta theorem.}
In all previous cases we were considering properties of collections of convex
sets. Now we replace \emph{convex sets} with \emph{a finite disjoint union of
convex sets}. It turns out that there is also a Helly-type theorem for this
case (if we keep this property also for intersections).

Let $\F$ be a finite family of subsets of some ground set. A family $\G$ is
\emph{$(\F, k)$-family} if for every nonempty $\G' \subseteq \G$ the
intersection of elements of $\G'$ is the disjoint union of at most $k$ members
of $\F$. We have defined the Helly number only for simplicial complexes. For
purposes of this subsection we say that a family $\F$ has \emph{Helly number}
$h = h(\F)$ equal to the Helly number of the nerve of $\F$. (Here we allow $\F$
to be possibly infinite.)

\begin{theorem}[Amenta~\cite{amenta96}]
Let $\F$ be a finite family of compact convex sets in $\R^d$. Let $\G$ be an
$(\F, k)$-family. Then $h(\G) \leq k(d+1)$.
\end{theorem}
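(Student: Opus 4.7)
My plan is to prove Amenta's theorem by induction on $n := |\G|$, combining a pigeonhole step with Helly's theorem applied to the underlying convex family $\F$. It suffices to show the Helly-number formulation: if every $k(d+1)$-subfamily of $\G$ has nonempty intersection, then $\bigcap_i G_i \neq \emptyset$. Using the $(\F,k)$-property, I fix decompositions $G_i = F_{i,1} \sqcup \cdots \sqcup F_{i,k_i}$ with $F_{i,j} \in \F$ and $k_i \leq k$; since the pieces $F_{i,j}$ are convex sets in $\R^d$, the ordinary Helly theorem applies to any subcollection of them.

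The base case $n \leq k(d+1)$ is trivial. For $n > k(d+1)$, the inductive hypothesis applied to each $(n-1)$-subfamily yields a point $p_i \in \bigcap_{j \neq i} G_j$ for every $i$. If some $p_i$ lies in $G_i$ we are done; otherwise assume $p_i \notin G_i$ for all $i$ and aim for a contradiction. By disjointness of pieces, each $p_i$ lies in a uniquely determined piece $F_{j, \phi_i(j)}$ of $G_j$ for every $j \neq i$. For each $j$, the $n-1 \geq k(d+1)$ points $\{p_i : i \neq j\}$ are distributed among at most $k$ pieces of $G_j$, so pigeonhole yields an index $s(j)$ and a set $T_j \subseteq [n] \setminus \{j\}$ of size $\geq d+1$ consisting of those $i$ with $p_i \in F_{j, s(j)}$. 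The strategy is then to apply Helly's theorem to the $n$ convex sets $F_{1, s(1)}, \ldots, F_{n, s(n)} \in \F$: any common point would lie in every $G_j$, contradicting $\bigcap_i G_i = \emptyset$.

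By Helly's theorem it suffices to show that every $(d+1)$-subcollection of the $F_{j, s(j)}$'s has a common point, and the $(d+1)$-subcollection indexed by $\{j_1, \ldots, j_{d+1}\}$ shares the common witness $p_i$ whenever $i \in \bigcap_r T_{j_r}$. The main obstacle is verifying that this intersection is nonempty for every $(d+1)$-tuple: with only $|T_j| \geq d+1$ the naive union bound over complements already fails for $k \geq 2$. Resolving this requires a more careful selection — for example allowing $s$ to depend on the tuple and invoking the colorful Helly theorem applied to the pieces of $G_{j_1}, \ldots, G_{j_{d+1}}$ to extract a compatible rainbow choice, or a double-counting argument that exploits the tight equality $n-1 = k(d+1)$ in the critical case $n = k(d+1)+1$. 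It is precisely here that the bound $k(d+1)$ enters sharply, appearing as the product of the pigeonhole modulus $k$ with Helly's threshold $d+1$.
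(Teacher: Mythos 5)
The survey states Amenta's theorem without proof (it only cites Amenta's paper and, later, the Eckhoff--Nischke generalization), so there is no in-paper argument to compare against; I can only assess your proposal on its own terms. Your overall architecture --- reduce to the claim that if every $k(d+1)$-subfamily of $\G$ meets then all of $\G$ meets, induct on $|\G|$, take witnesses $p_i \in \bigcap_{j \neq i} G_j$, and apply the pigeonhole principle to the at most $k$ pieces of each $G_j$ before invoking Helly --- is the natural first attack, and the reduction and base case are fine (a subfamily of an $(\F,k)$-family is again an $(\F,k)$-family, so the two formulations of ``Helly number at most $k(d+1)$'' are equivalent). But the argument has a genuine, and to your credit explicitly acknowledged, gap at its only nontrivial step: you never produce a consistent choice $s(1),\dots,s(n)$ of pieces for which every $(d+1)$-tuple $F_{j_1,s(j_1)},\dots,F_{j_{d+1},s(j_{d+1})}$ has a common witness. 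The sets $T_j$ have size only about $(n-1)/k$, so for $k \geq 2$ there is no reason for $d+1$ of them to have a common element, and neither of your suggested repairs works as stated: letting $s$ depend on the tuple destroys the single fixed family of convex sets to which Helly must ultimately be applied, and the colorful Helly theorem would require every rainbow selection of pieces to intersect, which the hypotheses do not provide.

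This is not a defect that a cleverer one-line pigeonhole fixes; it is exactly where the known proofs do real work. Amenta's own proof in~\cite{amenta96} does not follow this combinatorial route at all: it goes through the theory of generalized linear programming (LP-type problems), using compactness of the sets to define an objective function and analyzing the bases of the resulting problem. The purely combinatorial route you are attempting is essentially Morris's pigeonhole principle, mentioned in the survey in connection with Eckhoff and Nischke; carrying it out correctly requires a substantially stronger induction hypothesis (an induction organized around carefully chosen independent or ``critical'' subfamilies, not simply on $|\G|$), and it is delicate enough that Morris's original argument was itself considered incomplete until Eckhoff and Nischke repaired it. So your proposal should be read as a correct identification of the difficulty rather than a proof: to complete it you would need either to import the LP-type machinery or to restructure the induction along the lines of the Morris--Eckhoff--Nischke argument.
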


The bound $k(d+1)$ in Amenta's theorem is optimal as can be shown with the
following example. Let $\C = \{C_1, \dots, C_{d+1}\}$ be a collection of convex
sets in $\R^d$ such that every $d$ of them has nonempty intersection; however,
the intersection of the whole collection is empty (for example, $\C$ might be
the collection of facets of a $d$-simplex). Let $\C_* = \C \cup \{C\}$ where
$C$ is the convex hull of the union of sets in $\C$. Let us consider $k$
disjoint copies $\C_*^i = \{C^i_1, \dots, C_{d+1}^i, C^i\}$ of
$\C_*$ for $i \in \{1,\dots, k\}$ such that the sets $C^i$ are pairwise
disjoint. Now we construct sets $D^i_j$ for $i \in \{1, \dots, k\}$, $j \in
\{1, \dots, d+1\}$ by setting 
$$
D^i_j = \bigg(\bigcup\limits_{m \neq i} C^m\bigg) \cup C^i_j.
$$
Then $\G = \{D^i_j\}$ is an $(\F, k)$-family and $h(\G) = k(d+1)$. See
Figure~\ref{f:loweram}.

\begin{figure}
\begin{center}
\includegraphics{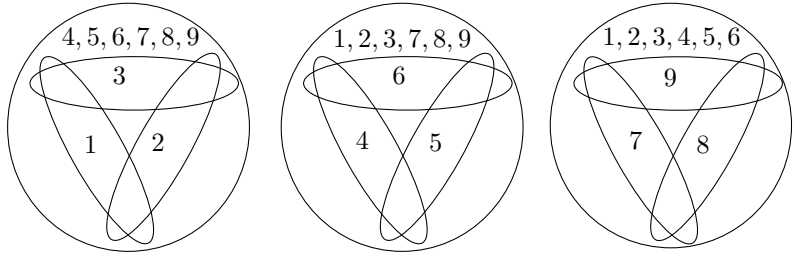}
\caption{The lower bound for the Amenta's theorem. The convex hulls are a bit
enlarged in order to make the figure more lucid. Moreover, the notation of
the sets is simplified.}
\label{f:loweram}
\end{center}
\end{figure}

Now we focus on a topological version. Due to the fact that we consider union
of sets, there is no simple statement for a topological generalization of Amenta's theorem using $d$-Leray complexes. Thus we prefer set up the statement for good covers in this case.

\begin{theorem}[Kalai, Meshulam~\cite{kalai-meshulam08}]
\label{t:km_a}
Let $\F$ be a finite good cover in $\R^d$. Let $\G$ be an $(\F,
k)$-family. Then $h(\G) \leq k(d+1)$.
\end{theorem}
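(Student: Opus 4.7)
The plan is to lift the problem from $\G$ to the underlying good cover $\F$, whose nerve is $d$-Leray by Corollary~\ref{c:gcl}, and then transfer the Helly bound along a ``colored projection'' that costs a factor of $k$.

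For the lift, fix any finite $\G_0\subseteq\G$ and decompose each $G\in\G_0$ into its connected components $G=F^G_1\sqcup\cdots\sqcup F^G_{k_G}$, where $F^G_j\in\F$ and $k_G\le k$ by the $(\F,k)$-family hypothesis. On the vertex set $V=\{(G,j):G\in\G_0,\ 1\le j\le k_G\}$ define the auxiliary complex $\widetilde{\K}$ by declaring $\sigma\subseteq V$ a face iff $\bigcap_{(G,j)\in\sigma}F^G_j\neq\emptyset$. The family $\{F^G_j\}$ is a sub-family of the good cover $\F$ (possibly with repetitions, which collapse without affecting the homotopy type of any induced subcomplex), so Corollary~\ref{c:gcl} yields that $\widetilde{\K}$ is $d$-Leray. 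Moreover the color classes $V_G=\{(G,j):j\}$ are antichains of size at most $k$, since distinct pieces of a single $G$ are disjoint in $\R^d$. Writing $\pi\colon V\to\G_0$ for the projection $(G,j)\mapsto G$, the $(\F,k)$-family property implies that whenever $G_{i_1}\cap\cdots\cap G_{i_s}\neq\emptyset$, picking any contractible piece of this intersection yields a face of $\widetilde{\K}$ whose $\pi$-image is $\{G_{i_1},\ldots,G_{i_s}\}$; hence the induced subcomplex of $\N(\G)$ on $\G_0$ coincides with the image complex $\{\pi(\sigma):\sigma\in\widetilde{\K}\}$.

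It then remains to apply a Kalai--Meshulam-type projection theorem: if a $d$-Leray complex is partitioned into antichains of size at most $k$, then the associated color-image complex has Helly number at most $k(d+1)$. Feeding $\widetilde{\K}$ into this theorem gives the bound $k(d+1)$ on the Helly number of $\N(\G)[\G_0]$ for every finite $\G_0\subseteq\G$, hence $h(\G)\le k(d+1)$. The main obstacle is the projection theorem itself: not every rainbow set in $\widetilde{\K}$ is automatically a face (only those that actually correspond to a common point of the chosen pieces), which blocks a direct application of the matroidal topological colorful Helly theorem (Theorem~\ref{t:ctopo}) to the partition matroid on the color classes $V_G$. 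The resolution, carried out in~\cite{kalai-meshulam08}, uses either an induction on $k$ coupled with Theorem~\ref{t:ctopo} applied to appropriately chosen matroids, or a Leray / Mayer--Vietoris spectral sequence argument exploiting that every fiber of $\pi$ is a discrete set of size at most $k$; either route yields the required bound.
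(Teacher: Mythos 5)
The paper itself offers no proof of Theorem~\ref{t:km_a}: it is a survey item stated with a citation to Kalai--Meshulam, so the only meaningful comparison is with the cited source. Your reduction is the correct one and matches the structure of their argument: decompose each $G\in\G$ into its at most $k$ pieces from $\F$, observe that the nerve $\widetilde{\K}$ of the resulting sub(multi)family of the good cover is $d$-Leray by Corollary~\ref{c:gcl}, note that the fibers of the projection $\pi$ are independent sets of size at most $k$, and check that $\N(\G)$ is exactly the image complex $\pi(\widetilde{\K})$. All of that bookkeeping is sound, and your observation that Theorem~\ref{t:ctopo} does not apply directly (the rainbow sets of the partition matroid need not be faces of $\widetilde{\K}$) is correct.

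The genuine gap is that the step you label ``a Kalai--Meshulam-type projection theorem'' is not an auxiliary tool you may cite in passing: it \emph{is} the theorem. The entire content of~\cite{kalai-meshulam08} is the bound on the Leray number of a projected complex, namely that if $X$ is $d$-Leray and $\pi$ has fibers of size at most $k$, then $\pi(X)$ is $\bigl(k(d+1)-1\bigr)$-Leray; combined with Theorem~\ref{t:htopo} this gives $h(\pi(X))\le k(d+1)$, which is the assertion to be proved. Your proposal replaces this with the sentence that ``either an induction on $k$ coupled with Theorem~\ref{t:ctopo}\dots or a Leray / Mayer--Vietoris spectral sequence argument\dots yields the required bound,'' without carrying out either. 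In particular the suggested induction on $k$ via Theorem~\ref{t:ctopo} is not obviously workable for exactly the reason you yourself identify, and the spectral-sequence route requires a genuine homological computation (controlling the higher differentials coming from fibers of size up to $k$) that is the technical heart of the original paper. As written, the proposal establishes only the (easy) reduction and defers the substance to the reference, so it does not constitute a proof of the statement.
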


Eckhoff and Nischke~\cite{eckhoff-nischke09} recently proved Amenta's theorem
in very abstract setting via Morris's pigeonhole principle. A (possibly
infinite) family $\F$ is \emph{intersectional}, if for any finite subfamily
$\F'$, the intersection $\cap \F'$ is either empty or belongs to $\F$. We call
$\F$ \emph{nonadditive} if for any finite subfamily $\F'$ of disjoint sets (at
least two nonempty), $\cup \F' \not\in \F$. The following theorem generalizes
the previous two theorems by setting $\F$ to be either a family of convex
compact sets, or a family of all intersections of a good cover.

\begin{theorem}
Let $\F$ be an intersectional and nonadditive set family. If $\G$ is an $(\F,
k)$ family, then $h(\G) \leq k h(\F)$.
\end{theorem}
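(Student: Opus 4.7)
The plan is to argue by contradiction: assume $h(\G) > k\cdot h(\F)$, set $h = h(\F)$, and fix a simplicial hole $G_1,\dots,G_m \in \G$ in $\N(\G)$ with $m > kh$, so that $\bigcap_{i\in I} G_i \neq \emptyset$ for every proper $I\subsetneq[m]$ while $\bigcap_{i=1}^m G_i = \emptyset$. The goal is to exhibit a simplicial hole in $\N(\F)$ of size larger than $h$, contradicting the definition of $h(\F)$.

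My first step would be a piece decomposition together with a compatibility lemma that draws on both hypotheses. Using the $(\F,k)$-property, for each non-empty $I\subsetneq[m]$ write $\bigcap_{i\in I} G_i = F_{I,1} \sqcup \cdots \sqcup F_{I,k_I}$ with $k_I \leq k$ and $F_{I,s}\in\F$. I would then show that for $I\subseteq J\subsetneq[m]$ with both non-empty, every piece $F_{J,t}$ is contained in a unique piece of $\bigcap_{i\in I} G_i$. The argument is short: since $F_{J,t}\subseteq\bigsqcup_s F_{I,s}$, one has $F_{J,t} = \bigsqcup_s (F_{J,t}\cap F_{I,s})$; each summand lies in $\F\cup\{\emptyset\}$ by intersectionality; and by nonadditivity a disjoint union of two or more non-empty $\F$-members is not in $\F$, so exactly one summand is non-zero. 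This yields a system of compatible maps $\phi_{IJ}\colon[k_J]\to[k_I]$ and, on taking $I$ to be a singleton, labels each piece by a function $\ell_{I,s}\colon I\to[k]$ that records which piece of each individual $G_i$ contains $F_{I,s}$.

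The combinatorial core will then be to feed this labelled structure into Morris's pigeonhole principle, the main tool of \cite{eckhoff-nischke09}. Applied to the $m > kh$ maximal-level intersections $H_j = \bigcap_{i\neq j}G_i$ (each splitting into at most $k$ pieces of $\F$), Morris's principle should extract an index set $J\subseteq[m]$ of size $> h$ together with a family of pieces of $\F$, indexed by the proper subsets of $J$, whose inclusion relations, guaranteed by the maps $\phi_{IJ}$, force them to realise an induced simplicial hole of size $|J|$ in $\N(\F)$. Since $|J| > h = h(\F)$, this contradicts the Helly number of $\F$ and proves the theorem.

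I expect the main obstacle to be precisely this last step. A naive pigeonhole on the label functions $\ell_j\colon[m]\setminus\{j\}\to[k]$ attached to pieces of $H_j$ cannot work directly, because these functions have different domains and, more concretely, a single chosen piece $F_j^{*}$ inside $H_j$ is automatically disjoint from any chosen piece of $H_{j'}$ (a common point would lie in $\bigcap_i G_i = \emptyset$), so by themselves the $F_j^{*}$ span no faces in $\N(\F)$ and give only an anti-chain of Helly number $2$. Morris's principle circumvents this by selecting pieces at many intermediate intersection levels in a mutually compatible way through the maps $\phi_{IJ}$; the verification that the resulting pieces indeed form an induced hole in $\N(\F)$ is exactly where the full strength of intersectionality, nonadditivity, and the $(\F,k)$-property must be combined.
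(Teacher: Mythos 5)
First, a point of reference: the paper does not actually prove this theorem; it states it as a result of Eckhoff and Nischke, recording only that their proof proceeds ``via Morris's pigeonhole principle.'' So the only comparison available is against that one-line description of the method, and your outline is indeed aligned with it: contradiction via a simplicial hole of size $m>k\,h(\F)$ in $\N(\G)$, decomposition of each proper intersection $\bigcap_{i\in I}G_i$ into at most $k$ pieces from $\F$, and the unique-containment lemma. That lemma is the one part you prove in full, and your argument for it is correct and clean: writing $F_{J,t}=\bigsqcup_s(F_{J,t}\cap F_{I,s})$, intersectionality puts each summand in $\F\cup\{\emptyset\}$ and nonadditivity kills all but one, giving the compatible maps $\phi_{IJ}$. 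Your observation that a naive one-piece-per-$H_j$ selection fails (the chosen top-level pieces are pairwise disjoint, since a common point would lie in $\bigcap_i G_i=\emptyset$) is also accurate and shows you understand why the problem is not a routine pigeonhole.

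The genuine gap is that the combinatorial core is never carried out. You write that Morris's pigeonhole principle ``should extract'' an index set $J$ with $|J|>h$ and a compatible system of pieces realising a hole in $\N(\F)$, and you yourself identify this as ``precisely the main obstacle'' --- but you neither state the principle nor verify that its hypotheses are met by the structure $(\phi_{IJ})$, nor check that its output is an induced simplicial hole on more than $h(\F)$ vertices of $\N(\F)$ (i.e.\ more than $h(\F)$ members of $\F$ every proper subfamily of which meets while the whole family does not). Morris's principle is a nontrivial lemma with its own inductive proof, and the selection of mutually compatible pieces across intermediate intersection levels is exactly where intersectionality, nonadditivity and the bound $k$ must interact; asserting that this ``should'' work is a statement of intent, not an argument. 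As it stands the proposal is a correct and well-motivated proof strategy with one lemma established, but the theorem is not proved: the step that actually produces the contradiction with $h(\F)$ is missing.
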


From another point of view, a generalization of Theorem~\ref{t:km_a} for 
collections of sets in more general topological spaces
(than $\R^d$) is obtained by Colin de Verdi\`ere,
Ginot and Goaoc~\cite{colindeverdiere-ginot-goaoc11arxiv}. More importantly
their generalization applies to collections of sets that need not come from good covers. For instance it applies to a collection of two sets homeomorphic to a ball which intersect in two balls.

By $\Gamma$ we denote a
locally arc-wise connected topological space. Then $d_{\Gamma}$ is the smallest
integer such that every open subset of $\Gamma$ has a trivial
$\Q$-homology in dimension $d_\Gamma$ and higher.
An open subset of $\Gamma$ with singular $\Q$-homology equivalent to a point is
a \emph{$\Q$-homology cell}. A family of open subsets of $\Gamma$ is acyclic if
for every nonempty subfamily $\G \subset \F$, the intersection of $\G$ is a
disjoint union of $\Q$-homology cells. Colin de Verdi\`ere et al prove the
following result.

\begin{theorem}
\label{t:cgg}
Let $\F$ be a finite acyclic family of open subsets of locally arc-wise
connected topological space $\Gamma$. If any subfamily of $\F$ intersect in at
most $k$ connected components, then the Helly number of $\F$ is at most
$k(d_{\Gamma} + 1)$.
\end{theorem}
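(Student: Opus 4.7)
The plan is to resolve each intersection into its connected components, form a multi-nerve $\M(\F)$ for these components, use a homological nerve theorem to show that $\M(\F)$ is $d_\Gamma$-Leray, and then transfer the resulting Helly bound back to $\N(\F)$ while paying a factor of $k$. First I would argue by contradiction: assume $\N(\F)$ contains an induced simplicial hole on $n > k(d_\Gamma+1)$ vertices and, after restricting to this subfamily, suppose $\N(\F) = \partial \Delta_{n-1}$, so that every $n-1$ sets in $\F$ have a common point while $\bigcap \F = \emptyset$.

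Next I would introduce the multi-nerve $\M(\F)$: vertices are pairs $(F,c)$ with $F \in \F$ and $c$ a connected component of $F$, and a set $\{(F_{i_1},c_1),\dots,(F_{i_j},c_j)\}$ spans a face iff the $F_{i_\ell}$ are pairwise distinct and $c_1 \cap \cdots \cap c_j \neq \emptyset$. The acyclicity hypothesis on $\F$ ensures that every non-empty intersection recorded in $\M(\F)$ is a single $\Q$-homology cell, and the same holds for any induced subcomplex $\M(\F)[W]$. A homological version of Theorem~\ref{t:nerve} then identifies $\tilde H_*(\M(\F)[W];\Q)$ with $\tilde H_*(U_W;\Q)$, where $U_W$ is the union of the selected components. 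Since $U_W$ is open in $\Gamma$, its $\Q$-homology vanishes in every dimension $\geq d_\Gamma$ by the definition of $d_\Gamma$. Hence $\M(\F)$ is $d_\Gamma$-Leray, and Theorem~\ref{t:htopo} gives Helly number at most $d_\Gamma+1$ for $\M(\F)$.

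The final step --- and the main obstacle --- is to transfer this Helly bound from $\M(\F)$ back to $\N(\F)$. Color the vertices of $\M(\F)$ by $(F,c) \mapsto F$; each color class has at most $k$ elements, and there are $n$ classes. The hypothesis that every $(n-1)$-tuple in $\F$ has a common point shows that for each omitted index $i_0$ one may choose components $(c_i)_{i \neq i_0}$ whose corresponding rainbow set is a face of $\M(\F)$. The goal is to combine these \emph{per-omission} choices into a single globally consistent selection of one component per color class whose induced subcomplex in $\M(\F)$ is a simplicial hole of dimension $\geq d_\Gamma$, contradicting the Leray bound of the previous step. The factor $k$ enters through this pigeonhole: iterating over the at most $k$ components per set, one expects that if $n > k(d_\Gamma+1)$ then such a globally consistent choice must exist. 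Making the pigeonhole sharp enough to produce exactly the factor $k$ (matching the lower bound from the construction after Amenta's theorem) is the delicate combinatorial heart of the argument, and is achieved in Colin de Verdi\`ere, Ginot and Goaoc by a careful spectral sequence analysis of the Mayer--Vietoris double complex of the cover, or alternatively by iterating a colored Helly-type principle on $\M(\F)$ built upon Theorem~\ref{t:ctopo} applied to its natural partition matroid.
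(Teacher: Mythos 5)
First, a point of calibration: the survey states Theorem~\ref{t:cgg} as a result of Colin de Verdi\`ere, Ginot and Goaoc and gives no proof of it, so your proposal has to stand on its own. The architecture you describe --- resolve intersections into connected components, form a ``multinerve'', show it captures the homology of the union, then project back onto $\N(\F)$ at the cost of a factor $k$ --- is indeed the strategy of the cited paper. But your version of the first step is already broken. You define a face of the multinerve to be a tuple $\{(F_{i_1},c_1),\dots,(F_{i_j},c_j)\}$ with $c_1\cap\cdots\cap c_j\neq\emptyset$ and assert that acyclicity of $\F$ makes this intersection a single $\Q$-homology cell. That is false: acyclicity says $F_{i_1}\cap\cdots\cap F_{i_j}$ is a disjoint union of homology cells, and several of those components can lie inside the \emph{same} tuple $(c_1,\dots,c_j)$ of components of the individual sets, so $c_1\cap\cdots\cap c_j$ may be disconnected. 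Your complex therefore does not satisfy the hypotheses of any homological version of Theorem~\ref{t:nerve}, and the conclusion that it is $d_\Gamma$-Leray does not follow. This is precisely why the actual construction is a simplicial \emph{poset}, with one cell for each connected component of each intersection $\bigcap\sigma$, rather than a simplicial complex on pairs $(F,c)$.

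Second, and more seriously, the transfer from the multinerve back to $\N(\F)$ is the entire content of the theorem, and you do not supply it. The per-omission choices of components need not be globally consistent: for different omitted indices $i_0$ the witnessing common points can lie in different components of the same set, and no pigeonhole on $n>k(d_\Gamma+1)$ forces a single selection of one component per set whose intersection pattern reproduces the hole; even granted such a selection, the induced subcomplex would not automatically be a simplicial hole of dimension at least $d_\Gamma$. Your closing sentence defers this step to a ``spectral sequence analysis \dots\ achieved in Colin de Verdi\`ere, Ginot and Goaoc,'' which is an appeal to the very argument you are supposed to be giving; the alternative you float, iterating Theorem~\ref{t:ctopo} on a partition matroid, is not developed and it is not clear it can yield the factor $k$. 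The factor $k(d_\Gamma+1)$ is actually produced by a Leray-type projection theorem comparing the homology of the multinerve and of the nerve, whose fibers over a simplex of $\N(\F)$ have size at most $k$; that is the missing and irreplaceable piece. As written, the proposal is an accurate table of contents for the known proof with its central chapter absent.
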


In particular, $d_{\R^d} = d$ and every good cover is acyclic, therefore
Theorem~\ref{t:cgg} indeed generalizes Theorem~\ref{t:km_a}. Let us also remark
that $d_{\Gamma} = d$ for a $d$-dimensional manifold which is either noncompact
or nonorientable and $d_{\Gamma} = d+1$ for a compact orientable manifold.

Theorem~\ref{t:cgg} can be furthermore generalized when the homology is zero
only from a certain dimension. For this case we already refer the reader
to~\cite{colindeverdiere-ginot-goaoc11arxiv}. On the other, the case when the
homology vanish for small dimensions is considered in (preceding)
paper~\cite{matousek97}. The bound to the Helly number is, however, much weaker.

\section*{Acknowledgement}
I thank Xavier Goaoc for discussions on new generalizations of Amenta's theorem
and additional remarks, Ji\v{r}\'{\i} Matou\v{s}ek for many valuable comments
to the preliminary version of the survey, and also Janos Pach for  
discussions about $d$-dimensional complexes which are not $2d$-representable.

\bibliographystyle{alpha}
\bibliography{/home/martin/clanky/bib/general}

\end{document}